\definecolor{NavyBlue}{rgb}{0.,0.44,0.7}
\newcommand{\tpitchfork}{%
  \vbox{
    \baselineskip\z@skip
    \lineskip-.52ex
    \lineskiplimit\maxdimen
    \m@th
    \ialign{##\crcr\hidewidth\smash{$-$}\hidewidth\crcr$\pitchfork$\crcr}
  }%
}
\begin{document}

\title{Hierarchical stratification of Pareto sets
}




\author{Alberto Lovison }
\address{Dipartimento di Matematica -- Universit\`a degli Studi di Padova\\
              via Trieste, 63 -- 35121 Padova ITALY\\
              Tel.: +39-049-8271310\\
              Fax: +39-049-8271499}
 \email{lovison@math.unipd.it} 

\author{Filippo Pecci}
\address{Dipartimento di Matematica -- Universit\`a degli Studi di Padova\\
              via Trieste, 63 -- 35121 Padova ITALY}
\email{filippo.pecci@studenti.unipd.it}



\maketitle

\begin{abstract} In smooth and convex multiobjective optimization problems the set of Pareto optima  is diffeomorphic to an $m-1$ dimensional simplex, where $m$ is the number of objective functions. 
The vertices of the simplex are the optima of the individual functions and the $(k-1)$-dimensional facets are the Pareto optimal set of $k$ functions subproblems.
Such a hierarchy of submanifolds is a geometrical object called \emph{stratification} and the union of such manifolds, in this case the set of Pareto optima, is called a \emph{stratified set}. 
We discuss how these geometrical structures generalize in the  non convex cases, we survey the known results and deduce possible suggestions for the design of dedicated optimization strategies.\\
%
\textbf{Keywords:} Multiobjective optimization --- nonconvexity and multiextremality --- stability of mappings --- stratifications.\\ 
\textbf{Mathematics Subject Classification:} 90C26 --- 90C29 --- 58K05
\end{abstract}


\section{Introduction}

Multiobjective optimization (MO) deals with the problem of optimizing several functions at once. 
\begin{definition}
 	Let $f_1,\dots,f_m:W \to \R$, $W\subseteq \R^n$, $x,y\in W$. $x$ \emph{dominates} $y$ if $f_i(x)\geq f_i(y)$, for all $i$ and $f_j(x)> f_j(y)$ for at least one $j$. If there does not exist any point $y\in W$ dominating $x$, then $x$ is called a \emph{Pareto optimum}. 
\end{definition}
The first important difference with single function optimization is that the set of optima is composed by a unique point only in degenerate cases: usually, even with only two functions, the set of optima consists in an uncountable set of points. The set of all such generalized optima is called the set of Pareto optima. If there exists a neighborhood of $p$ in $W$ where $p$ is Pareto optimal, then $p$ is called a \emph{local Pareto optimum}. The set of local Pareto optima is denoted by $\theta_{op}$.

In most practical cases the goal of MO is to produce a unique solution coping with the several requirements represented by the objective functions, however, the choice of the preferred solution is always the outcome of a compromise involving non universal criteria.  

Being in principle all the possible optimal solutions equivalent, it seems worth to obtain the most precise and complete information on them before passing to the decision stage.\footnote{Among many others, we mention some continuation methods aiming at approximating the whole structure of the Pareto set \cite{Hillermeier:2001fk,Das:1998zh,Delgado-Pineda:2013qf,Miglierina:2008yq,Schutze:2005fk,Martin:2013gf,Custodio:2011rm}.}
Such a complete representation of the Pareto optimal set assumes a special meaning in the point of view of global optimization. 

Furthermore, when a parametric optimization problem can be defined as, e.g., a weighted sum or a more general non linear combination of a number of functions, the optimal solution varies as the parameters vary, spanning a portion of the Pareto set of the MO problem defined by the family of functions. These problems are of particular interest, because a single optimal design should not be selected among the others, but in some sense the whole family of solutions has to be implemented. We will discuss an example in biology of such applications in section \ref{sec:evolution}, but more examples has emerged also recently in completely different research areas, e.g., materials science \cite{Lejaeghere:2013kx,Truskinovsky:1996hf}.

Our goal is to describe the geometrical organization of the set of Pareto optima one is expected to find in the generic smooth but nonlinear and non convex MO problem. 
We will see that in the general case of an MO problem with $m$ objective functions, the set of local Pareto optima $\theta_{op}$ will consist in a \emph{manifold with boundary and corners} of dimension $m-1$, i.e., a geometrical object called a \emph{stratified set} in the sense of Mather \cite{Mather:1973fk,Mather:2012gp}. 
As will be described in the examples of section \ref{sec:evolution}, the components (\emph{strata}) of dimension $k-1$ in the Pareto set are related to the subproblems of $k$ objective functions. Such strata play a special role in the optimization process and are fundamental ingredients to decrypt the problem structure when the objective functions themselves are unknown and need to be investigated in order to reveal, in the Darwinian evolution example of section \ref{sec:evolution}, \emph{how nature works}.


\section{Motivation: Darwinian evolution}
\label{sec:evolution}
The guiding principle in the Darwinian theory of evolution is often resumed as the ``survival of the fittest''. This is often viewed as an optimization principle, suggesting the existence of a hidden fitness function which the living beings are aimed to maximize. The problem comes when the function itself becomes the focus of the interest, because its definition resists to be ultimately discovered. J
Indeed, sooner or later one is faced with involutive statements like ``the fitness is the function which is maximized by the surviving individuals''. As an extremization of this paradox, \cite{Thurner:2010hk} proposes a model of evolution reproducing many phenomena observable in nature without having resort to the introduction of a fitness function. 

Indeed, the large variety of species and even of individuals in the same species, raises serious arguments against the definition of a unique and immutable optimization process followed by evolution. As we have already observed, this multiplicity of solutions is not a problem in multiobjective optimization, but it is rather one of its fingerprints.   
Recently \cite{Shoval:2012ke,Noor:2012hg,Schuetz:2012io}, it has been proposed that evolution could be driven by multiple objectives. More precisely, one could think that an individual, or a species, is characterized by a vector of traits $v=(v_1,\dots,v_n)$, i.e., quantitative measures like beak length, body size, area proportions of molar teeth, resuming its \emph{phenotype}. The space of traits to which $v$ belongs is called the \emph{morphospace}. Then one assumes that the fitness function $F$ is a monotonically increasing function of a number of performance functions  $f_1,\dots,f_m$, $f_i(v)$ representing how well a specific task $i$ is performed by a organism with phenotype $v$. When the performance function $f_i$ has a single optimum $v_i$, this is referred to as the \emph{archetype} for $f_i$. 
 It is reasonable that the comprehensive fitness function $F$ depends monotonically on the performances $f_i$, however, the different contributions of these performances can vary over time or geographical space or because of interaction with other species. Therefore all phenotypes belonging to the Pareto set of the MO problem defined by $f_1,\dots,f_m$ have the chance to be selected by evolution and be observed in wildlife. 

\section{A hierarchical decomposition of the local Pareto set in the general convex case}

\begin{proposition}[Hierarchical decomposition in the convex case]
 	Let $W\subseteq\R^n$ open and convex. Let $f_i:W\To \R$, $i=1,\dots,m$, $m\leq n$, be smooth and convex functions. Then the Pareto set is a \emph{curved $m-1$ simplex}, i.e., is diffeomorphic to an $m-1$ dimensional simplex, i.e., the convex hull of a set of $m$ points in general position in $\R^n$. Each one of the vertices of the curved $m-1$ simplex coincide with one of the optima of the $m$ functions taken separately. Every $k-1$ facet of the curved simplex corresponds to the Pareto optimal set of the MO problem defined by a subset of $k$ functions in $\set{f_1,\dots,f_m}$.
\end{proposition}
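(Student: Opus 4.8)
The plan is to exhibit $\theta_{op}$ as the diffeomorphic image of the standard simplex under the weighted-sum scalarization, with the faces of the simplex pulled back from the sub-Pareto-sets. Write $\Delta^{m-1}=\set{\lambda\in\R^m : \lambda_i\geq 0,\ \textstyle\sum_i\lambda_i=1}$ and, for $\lambda\in\Delta^{m-1}$, set $F_\lambda=\sum_{i=1}^m\lambda_i f_i$. The first-order (Fritz John / KKT) condition for $x$ to be Pareto optimal is that $x$ be a critical point of some $F_\lambda$, i.e.
\begin{equation}\label{eq:foc}
G(x,\lambda):=\sum_{i=1}^m \lambda_i\,\nabla f_i(x)=0,\qquad \lambda\in\Delta^{m-1}.
\end{equation}
Convexity makes this condition sufficient as well: since $F_\lambda$ is convex, a critical point is a global minimizer, and when all $\lambda_i>0$ such a minimizer cannot be dominated, hence lies in $\theta_{op}$.

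First I would build the parametrizing map. Under the (generic, strictly convex) assumption that the Hessian $\sum_i\lambda_i\nabla^2 f_i(x)$ is positive definite along the Pareto set, each $F_\lambda$ has a unique minimizer $x(\lambda)$, and the implicit function theorem applied to \eqref{eq:foc}, whose $x$-derivative is exactly this invertible Hessian, shows that $\Phi\colon\lambda\mapsto x(\lambda)$ is smooth, including up to the faces and corners of $\Delta^{m-1}$. Differentiating \eqref{eq:foc} gives $d\Phi=-\big(\sum_i\lambda_i\nabla^2 f_i\big)^{-1}[\nabla f_1,\dots,\nabla f_m]$, so on the tangent space $\set{\sum_i d\lambda_i=0}$ of the simplex the rank of $d\Phi$ equals the rank of the gradient matrix restricted there; this is where I would invoke $m\leq n$ and general position to conclude that $d\Phi$ has full rank $m-1$, making $\Phi$ an immersion.

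The crux is that $\Phi$ is a bijection onto $\theta_{op}$. Surjectivity is the genuinely convex step: a short computation shows that the extended image $f(W)+\R^m_{\geq 0}$ is convex, so a Pareto optimum $x^*$ can be separated from it by a supporting hyperplane whose outward normal, after normalization, is a weight $\lambda\in\Delta^{m-1}$ with $x^*=x(\lambda)$; thus every Pareto optimum is captured by the scalarization. Injectivity is where I expect the main difficulty: if $x(\lambda)=x(\lambda')=x^*$ then $\lambda-\lambda'$ lies in the kernel of $\mu\mapsto\sum_i\mu_i\nabla f_i(x^*)$, so I must show this kernel is one-dimensional and meets the hyperplane $\set{\sum_i\mu_i=1}$ in a single point. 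Equivalently, along $\theta_{op}$ the gradients $\nabla f_1,\dots,\nabla f_m$ must span an $(m-1)$-plane, i.e. admit a single dependence relation up to scale; securing this uniform corank-one condition is exactly the general-position hypothesis, and it is precisely the property that fails in the nonconvex case.

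Finally I would read off the face structure. The vertex $\lambda=e_i$ forces $\nabla f_i(x)=0$, so $x(e_i)=\operatorname{argmin} f_i$ is the individual optimum, giving the $m$ vertices. Restricting $\Phi$ to the $(k-1)$-face spanned by $\set{e_{i_1},\dots,e_{i_k}}$ sets the remaining weights to zero, so \eqref{eq:foc} reduces to the first-order condition for the subfamily $\set{f_{i_1},\dots,f_{i_k}}$; by induction on $m$ this face is carried bijectively onto the Pareto set of that subproblem. Since $\Phi$ is a smooth injective immersion on the compact simplex it is an embedding, and its inverse $x\mapsto\lambda(x)$, the unique multiplier solving the linear system \eqref{eq:foc} and smooth by Cramer's rule, is smooth; hence $\Phi$ is a diffeomorphism of manifolds with corners carrying faces to sub-Pareto-sets, which is exactly the curved $m-1$ simplex asserted.
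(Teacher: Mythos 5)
Your proposal is correct in substance, but it takes a genuinely different route from the paper, which in fact offers no self-contained argument at all: the paper's ``proof'' consists of a pointer to the graphical discussion in Shoval et al.\ and to Smale's characterization of the Pareto set via first- and second-order derivative conditions. Your weighted-sum scalarization --- parametrizing $\theta_{op}$ by $\Delta^{m-1}$ via $\lambda\mapsto\arg\min F_\lambda$, surjectivity from the supporting hyperplane to the convex set $f(W)+\R^m_{\geq 0}$, injectivity from the corank-one condition on the gradients, faces read off from the vanishing pattern of $\lambda$ --- is the classical convex-analysis route, and it buys something the citation does not: an explicit map of manifolds with corners under which vertices and $(k-1)$-faces of the simplex go exactly to the individual optima and the sub-Pareto sets, which is precisely the content of the proposition. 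What the Smale route buys instead is robustness to nonconvexity: first/second-order characterizations survive when convexity is dropped (the theme of the rest of the paper), whereas scalarization does not, since its surjectivity step is genuinely convex.

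Two caveats, which you partly flag yourself. First, your proof uses hypotheses absent from the statement: positive-definite Hessians along the Pareto set (to run the implicit function theorem) and the uniform corank-one condition (for injectivity and the immersion property). These do \emph{not} follow from convexity, and they are genuinely needed, because the proposition as literally stated is false: take $f_1=|x|^2$, $f_2=|x-a|^2$, $f_3=|x-2a|^2$ on $\R^3$ with $a\neq 0$. All stated hypotheses hold (smooth, strictly convex, $m=3\leq n=3$), yet the Pareto set is the segment from $0$ to $2a$, not a $2$-simplex, exactly because $\nabla f_2$ and $\nabla f_3$ are parallel at the minimizer of $f_1$ (collinear archetypes, corank two). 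So your reading of ``general position'' as a hypothesis on the functions rather than as part of the definition of a simplex is the right repair of a loosely worded statement. Second, some smaller points: the image of a boundary $\lambda$ must also be shown to be Pareto optimal for the \emph{full} problem, which needs the uniqueness of the minimizer of $F_\lambda$ (if $y$ dominated $x(\lambda)$ then $F_\lambda(y)\leq F_\lambda(x(\lambda))$, contradicting uniqueness) --- you only argue the interior case $\lambda_i>0$; existence of minimizers of $F_\lambda$ inside the \emph{open} set $W$ needs an argument or an assumption, since minimizers can escape to $\partial W$; and the paper's dominance convention is maximization, so strictly speaking you should flip signs or take the $f_i$ concave. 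None of these affects the architecture of your argument.
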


\begin{remark}
 	In the case that all the functions are spherically symmetric the Pareto set is exactly an $m-1$ simplex, i.e., the facets are flat.
\end{remark}

\begin{proof}[Hierarchical decomposition in the convex case] This proposition is presented and discussed graphically in \cite{Shoval:2012ke}.  An alternative proof follows straightforwardly from the characterization of the Pareto set in terms of first and second order derivatives by Smale \cite{Smale:1973km} (see also \cite{Lovison:2013uq,Lovison:2011uq,Smale:1975oy,Wan:1977yi,Wan:1975lt}).\phantom{\qed}
\end{proof}

\begin{figure}[htbp]
\begin{center}
\begin{center}
  \begin{tabular}{@{} ccc @{}}
    \hline
    \includegraphics[width=35mm]{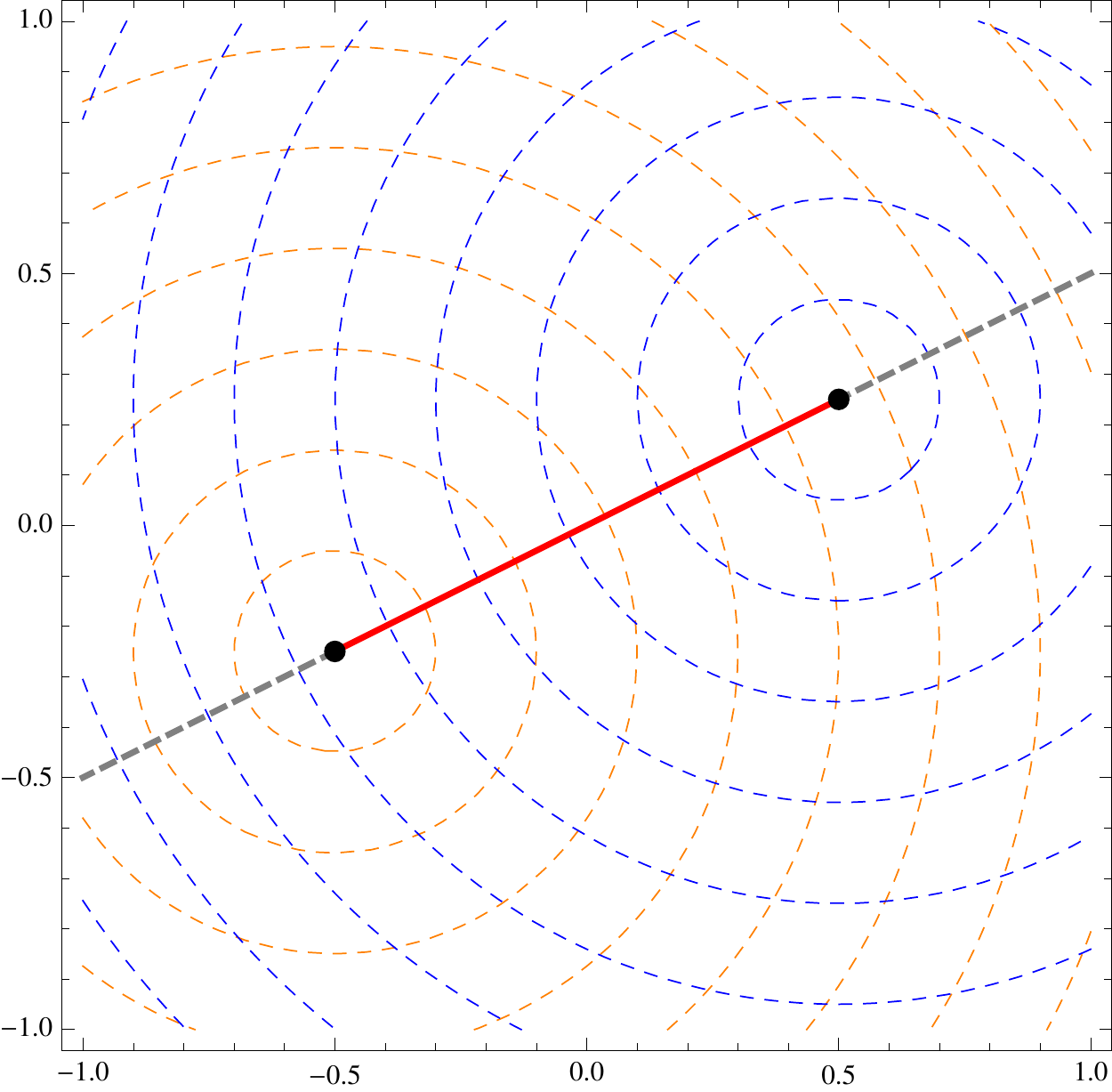} & \includegraphics[width=35mm]{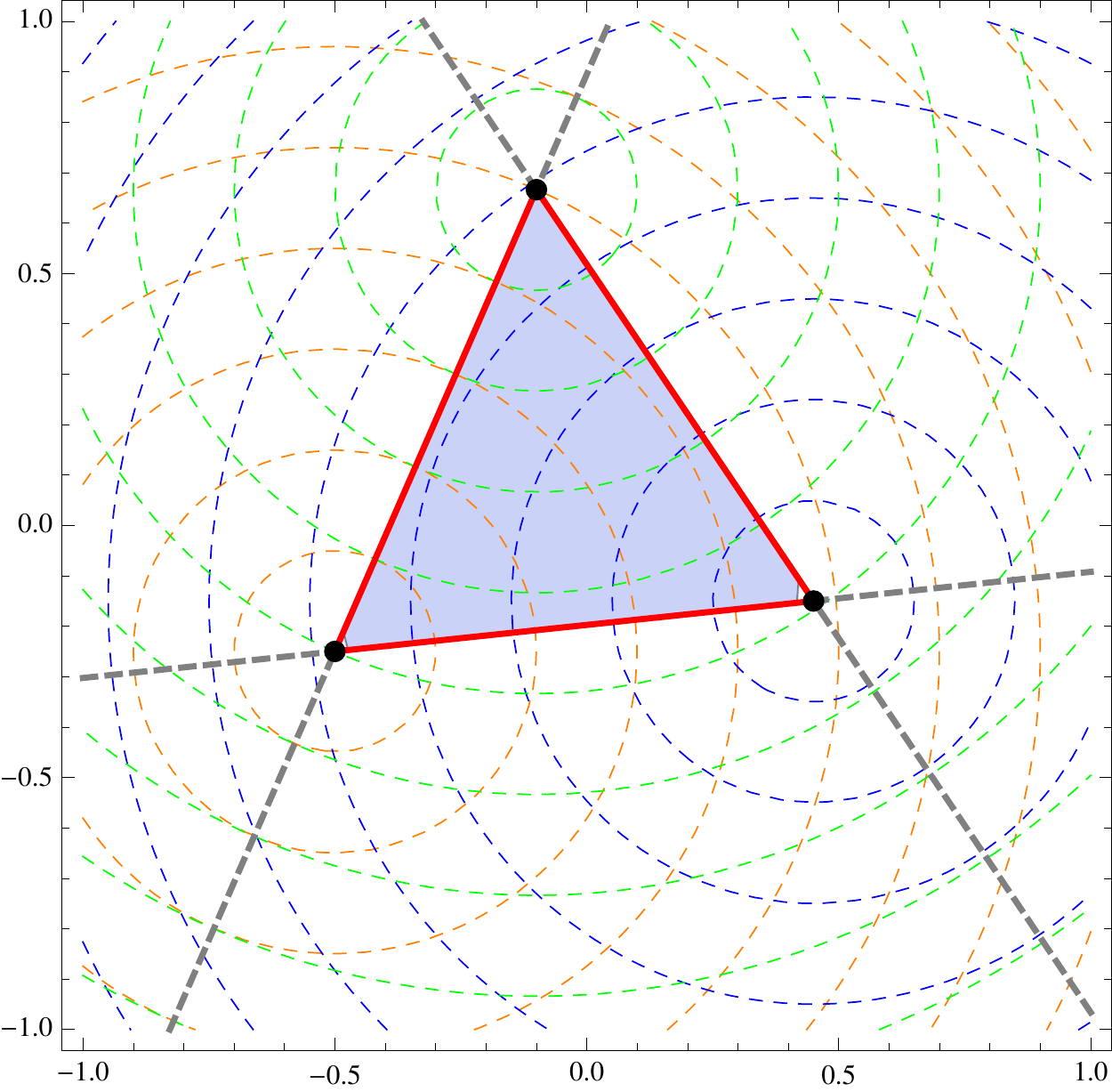} & \includegraphics[width=35mm]{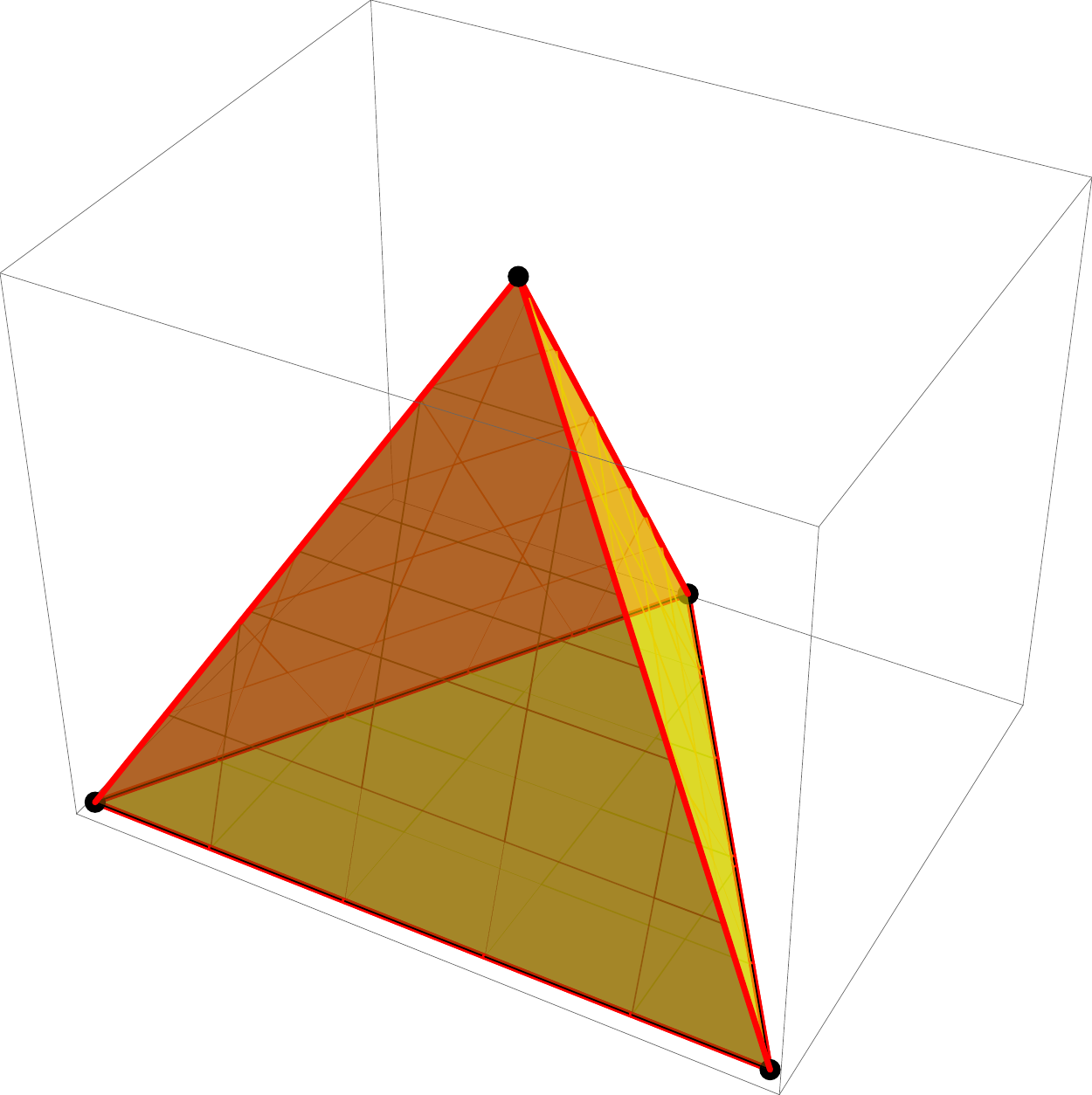} \\ 
    \hline
    (a) & (b) & (c) \\ 
    \hline
  \end{tabular}
\end{center}
\caption{Pareto sets in the convex case with spherical symmetry. (a) Two functions: the singular set (gray lines) is the locus where the gradients of $f_1$ and $f_2$ are parallel, i.e., where the level surfaces are tangent. The Pareto set (red lines) is the set where the gradients are parallel and oriented in opposite directions. The extrema are the optima of the two functions taken separately. (b) Three functions. The borders of the triangle are the Pareto optimal set for  the 2-objectives subproblems $\set{f_1,f_2}$, $\set{f_2,f_3}$ and $\set{f_3,f_1}$ while the corners are the optima of the three functions taken one by one. (c) Four functions. The facets of the tetrahedron are the optima of the 3-objectives subproblems. Analogous considerations hold for   lower dimensional elements of the skeleton.}
\label{fig:spherical_symmetry}
\end{center}
\end{figure}

\begin{figure}[htbp]
\begin{center}
\begin{center}
  \begin{tabular}{@{} ccc @{}}
    \hline
    \includegraphics[width=35mm]{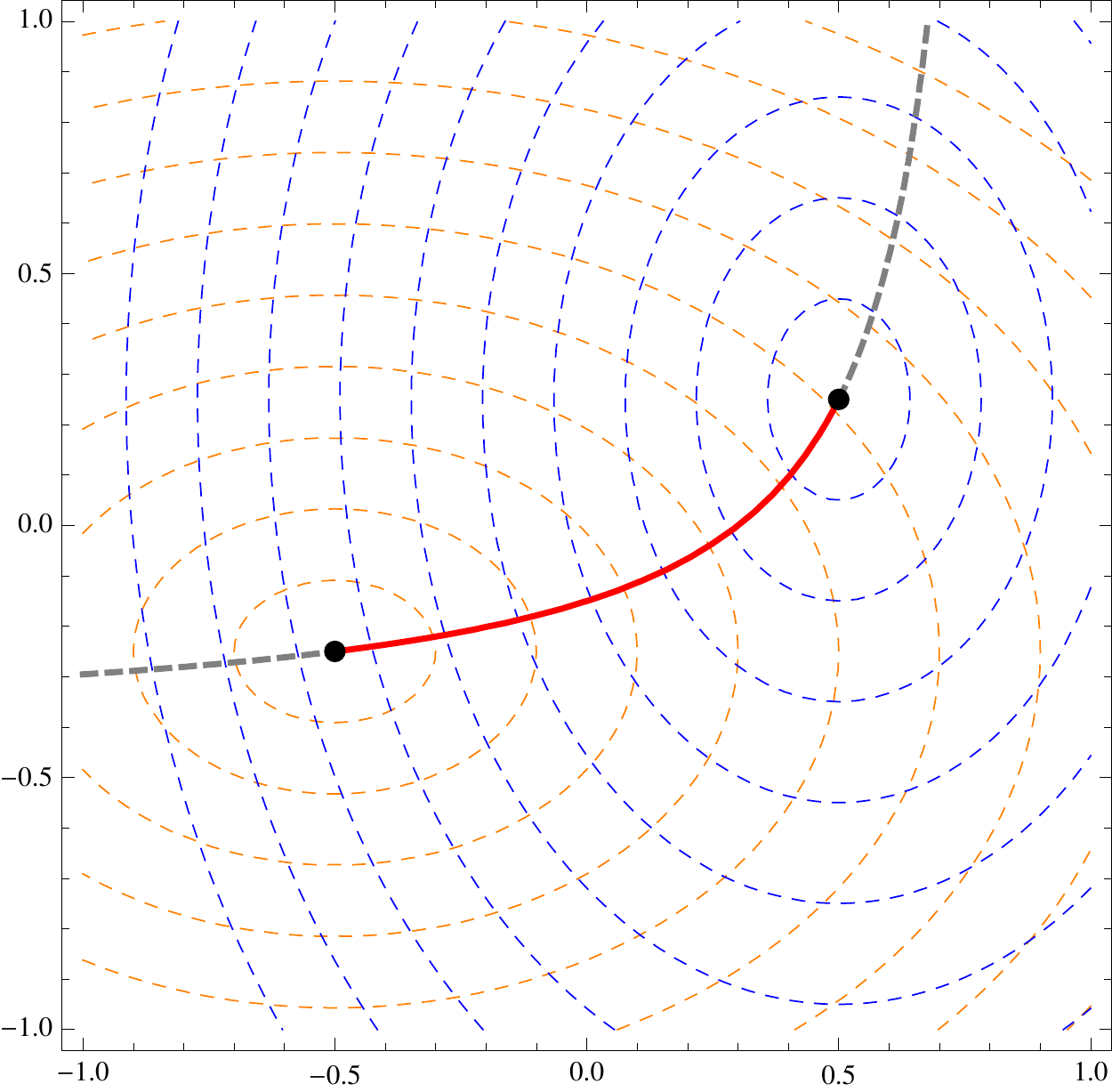} & \includegraphics[width=35mm]{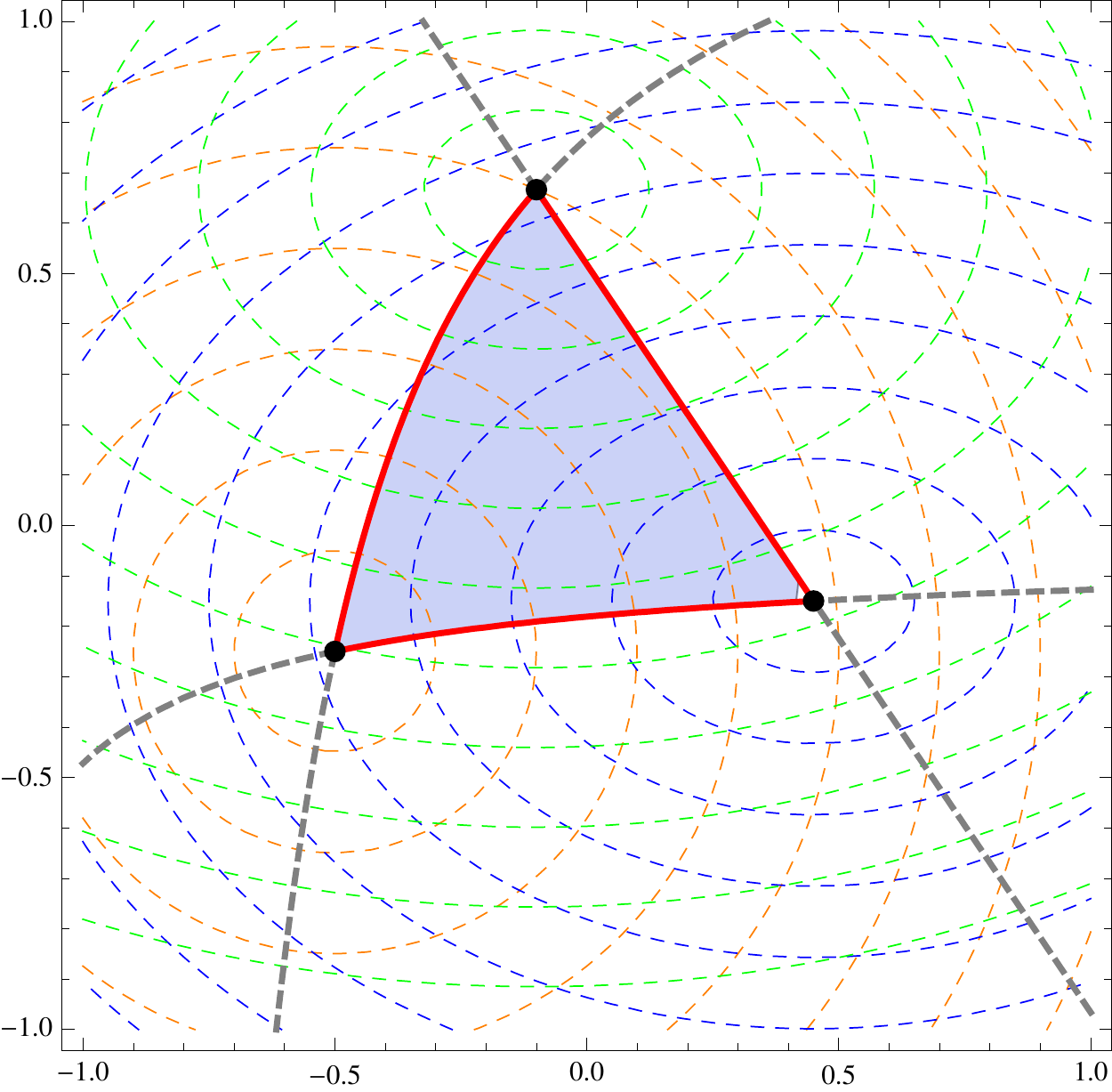} & \includegraphics[width=35mm]{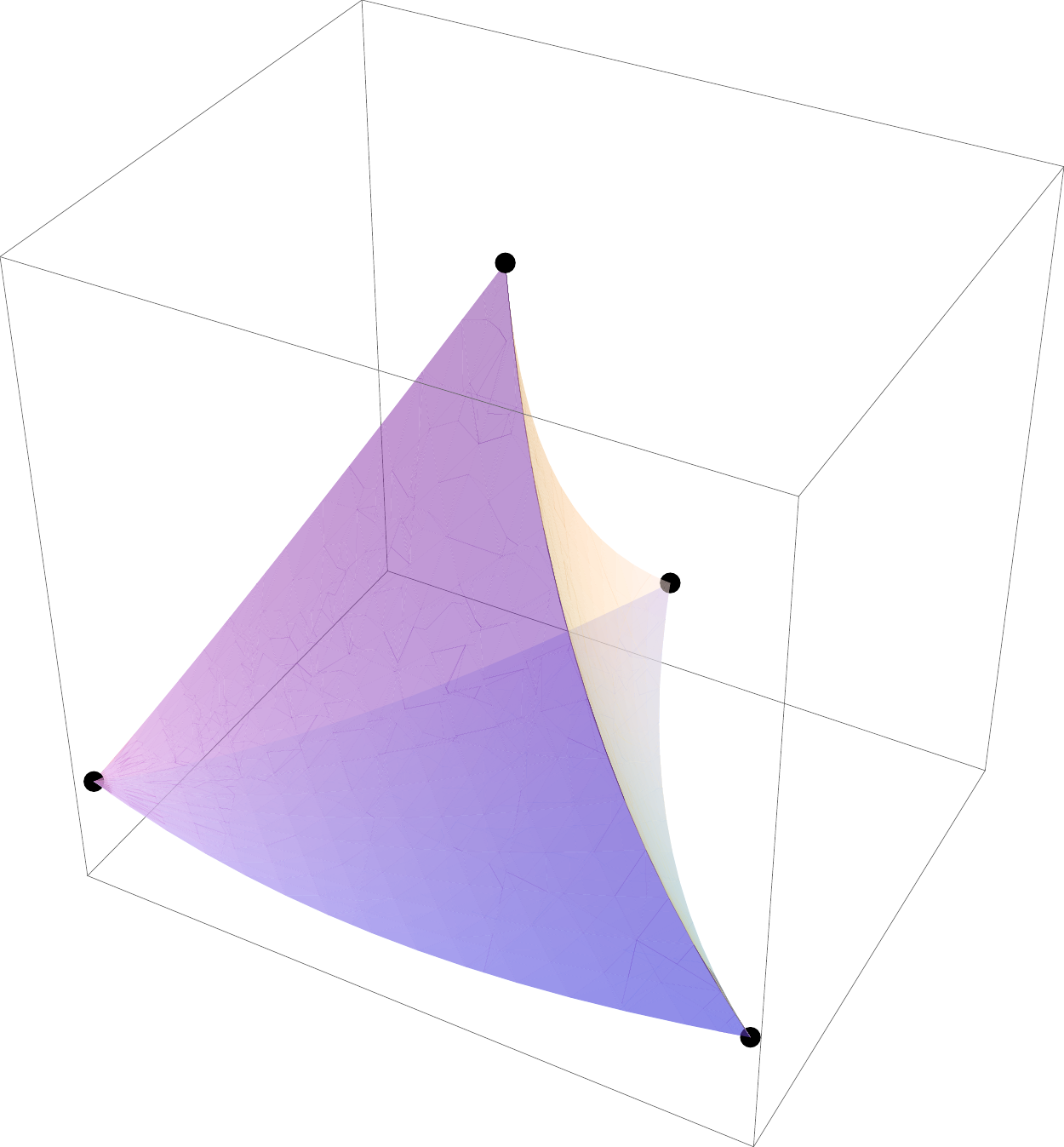} \\ 
    \hline
    (a) & (b) & (c) \\ 
    \hline
  \end{tabular}
\end{center}
\caption{Pareto sets in the general convex case. (a) Two functions. (b) Three functions. (c) Four functions. The curved $(k-1)$ skeleton of such curvilinear simplexes are related to the $k$-objectives subproblems, analogously to what happens in the spherically symmetric case.}
\label{fig:convex_case}
\end{center}
\end{figure}

\subsection{Darwinian evolution with multiobjective fitness}

In \cite{Shoval:2012ke}, the authors explore the variety in morphospace of a number of family of species, and by recognizing shapes assimilable to simplexes in the set of wild species.
See for instance the beak sizes and shapes for the Darwin's ground finches, leaf-cutter ants and microchiroptera  (Figure \ref{fig:triangles_in_evo}).
\begin{figure}[htbp]
\begin{center}
	\includegraphics[width=105mm]{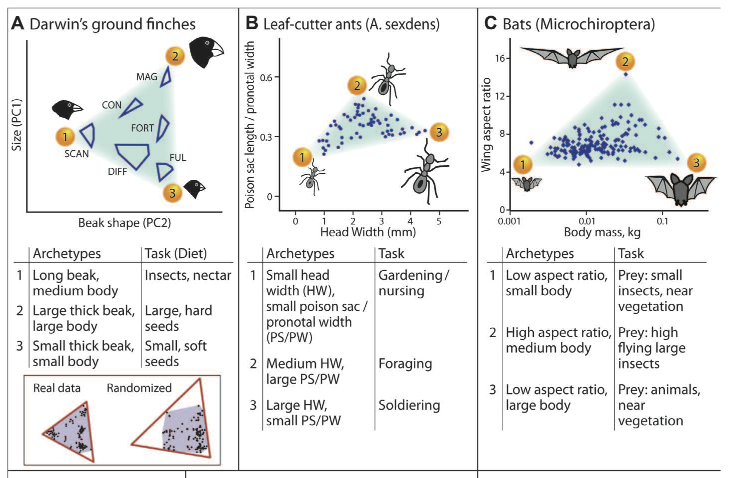}
\caption{Triangle-shaped sets of wild species observed in morphospace. From \cite{Shoval01062012}. Reprinted with permission from AAAS.}
\label{fig:triangles_in_evo}
\end{center}
\end{figure}
Even at the level of convex functions, the idea of the hierarchical decomposition of the Pareto set offers multiple advantages at least at two different levels. First of all, at epistemic level, the inspection of the Pareto set allows to detect the archetypes and therefore to decrypt the unknown objective functions. 
If this is the underlying principle of selection, the detection of the Pareto set and of the archetypes allows for a decryption of the structure of the multiobjective fitness function, revealing eventually the  
workings of the evolution mechanism, avoiding tautological loops.
Secondly, at numerical level, we have that solving the ordinary single objective functions 
one can detect the archetypes and then can obtain a zero-th order approximation of the Pareto set consisting simply in tracing the convex hull of the set of archetypes.
The approximation of the Pareto set with a simplex is exact when the objectives have rotational symmetry (e.g., quadratic forms with a unique eigenvalue). The simplex start to exhibit curvature as the objective functions depart from the spherical symmetry.
It is however worth noting that in the convex case we could obtain an essentially faithful approximation of the whole Pareto set simply solving $m$ single objective optimization problems and building their convex hull.

\section{Stratification of the Pareto set and sufficient regularity of objective functions}
In the convex case the Pareto set is diffeomorphic to an $(m-1)$ dimensional simplex, 
in the non convex case the situation can be much more complicated, analogously to what happens for the single objective case, where because of non convexity  multiextremality and non maximal critical points arise.

Nevertheless, in non pathological situations, the set of local Pareto optima appears still intelligible. Its geometrical structure can be analyzed and classifiable as an $(m-1)$ dimensional \emph{stratified set}. Heuristically, a stratified set is a \emph{manifold with borders and corners}, i.e.,  an $(m-1)$ manifold whose frontier is composed by manifolds of lower dimension with ``nice'' intersections. 

This section is rather technical and is  dedicated to a precise specification of what we mean by  
``nice intersection'', ``stratification'' and ``non pathological'', where we revisit in a unitary form the results in \cite{Melo:1976sj,Melo:1976xd,Wan:1977yi,Wan:1978jy}. 
Because of lacking of space we cannot give a self contained introduction on the singularities of differentiable mappings. The reader is referred to 
\cite{Arnold:1968fx,Golubitsky:1973uj}. 

There is also another allied concept which is necessary during the discussion, the \emph{Pareto criticality} \cite{Smale:1973km}. 
\begin{definition}[Pareto criticality]
Let $f:W \To R^m$ be smooth. Let $Pos$ be the positive orthant of $\R^m$, i.e., $Pos=\set{(y_1,\dots,y_m)\taleche y_j>0}$.
A point $p$ is said \emph{Pareto critical} if $\im Df(p) \cap Pos = \emptyset$.
\end{definition}
As it happens with ordinary scalar functions, criticality becomes important as we relax the assumption of convexity, because not only maxima  have zero derivative but also critical points of different nature arise, as minima or saddles. The condition mentioned is still important because it is a necessary condition for optimality and is a condition involving only first order derivatives.\footnote{Further discussion on Pareto criticality can be found in \cite{Lucchetti:2004rt,Miglierina:2008gf}.}

\subsection{Semialgebraic sets}
\begin{definition}
$C \subset \R^m$ is said to be \emph{semialgebraic} if and only if it can be defined as a finite union of sets $K_j$, each being defined by a finite set of polynomial equations and inequalities. In other words: 
\[
C= \bigcup \limits_{j=1}^n K_j \qquad K_j= \bigl \{ f_{_{i,j}}=0, \; g_{_{h,j}}>0, \quad i=1,\ldots,l, \quad h=1,\ldots,k \bigr \}
\] 
(where $f_{_{i,j}}$ and $g_{_{h,j}}$ are polynomial functions.)
\end{definition}

The closure of a semialgebraic set $C$, denoted by $\mathrm{Cl}(C)$, is defined by converting all the strict inequalities in the definition of $C$ to weak inequalities. $\mathrm{Cl}(C)$ is still semialgebraic.

\begin{theorem}\label{th:semialg}
A subset of $\R^m$ is semialgebraic if and only if it is in the smallest family of subsets of $\R^l$ closed under finite union, finite intersection and complements and which contains sets of the form $\bigl \{ g > 0 \bigr \}$, where $g$ is a polynomial in $m$ variables with real coefficients.
\end{theorem}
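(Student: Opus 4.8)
The plan is to prove the two inclusions between the family $\mathcal{A}$ of sets admitting the representation of the preceding definition (finite unions of basic pieces $K_j$ cut out by finitely many polynomial equalities and strict inequalities) and the family $\mathcal{B}$, the smallest family of subsets of $\R^m$ containing every set $\set{g>0}$ and closed under finite union, finite intersection and complement. The equality $\mathcal{A}=\mathcal{B}$ then follows by antisymmetry.

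For the inclusion $\mathcal{A}\subseteq\mathcal{B}$, I would argue that each building block of a set in $\mathcal{A}$ already lies in $\mathcal{B}$. The sets $\set{g>0}$ are generators, hence in $\mathcal{B}$; the equality locus $\set{f=0}$ can be written as the complement of $\set{f>0}\cup\set{-f>0}$, which places it in $\mathcal{B}$ since the latter is closed under union and complement. A basic piece $K_j$ is then a finite intersection of such sets, and a member of $\mathcal{A}$ is a finite union of the $K_j$; both operations stay inside $\mathcal{B}$, yielding the inclusion.

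For the reverse inclusion I would invoke the minimality of $\mathcal{B}$: it suffices to check that $\mathcal{A}$ itself contains every generator $\set{g>0}$ and is closed under the three operations, for then $\mathcal{B}\subseteq\mathcal{A}$ automatically. That $\mathcal{A}$ contains the generators and is closed under finite union is immediate from the definition, and closure under finite intersection follows by distributing the intersection of two finite unions over their pieces, each intersection $K_j\cap K'_l$ being again a conjunction of polynomial equalities and strict inequalities.

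The main obstacle---and the only step requiring genuine care---is closure of $\mathcal{A}$ under complement. Here I would pass to the complement of a single finite union via De Morgan, reducing to showing that the complement of one basic piece $K_j=\set{f_i=0,\; g_h>0}$ is semialgebraic. Applying De Morgan once more, this complement is the finite union of the sets $\set{f_i\neq 0}$ and $\set{g_h\leq 0}$. Each of these is visibly semialgebraic: $\set{f_i\neq 0}=\set{f_i>0}\cup\set{-f_i>0}$, while $\set{g_h\leq 0}=\set{-g_h>0}\cup\set{g_h=0}$, where the single equality $\set{g_h=0}$ is itself a basic piece. Thus the complement of $K_j$ lies in $\mathcal{A}$, and since $\mathcal{A}$ is already closed under finite intersection, so does the complement of any finite union of basic pieces. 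This establishes closure under complement, completes the reverse inclusion, and hence the desired equality $\mathcal{A}=\mathcal{B}$.
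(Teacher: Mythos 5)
Your proof is correct and complete. Note, however, that the paper does not actually prove this statement at all: its ``proof'' is a one-line citation to Seidenberg, so there is no internal argument to compare yours against. What you have supplied is the elementary argument that the citation hides, and it is the right one: the class of finite unions of basic pieces $K_j$ and the Boolean algebra generated by the sets $\bigl\{ g>0 \bigr\}$ coincide, with the only step requiring care being closure of the first class under complementation, which you handle correctly by De Morgan together with the decompositions $\bigl\{ f \neq 0 \bigr\} = \bigl\{ f>0 \bigr\} \cup \bigl\{ -f>0 \bigr\}$ and $\bigl\{ g \leq 0 \bigr\} = \bigl\{ -g>0 \bigr\} \cup \bigl\{ g=0 \bigr\}$, plus the previously established closure under finite intersection. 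It is worth emphasizing that this theorem is genuinely elementary Boolean bookkeeping, in contrast to the Tarski--Seidenberg theorem cited immediately after it in the paper (stability of semialgebraicity under polynomial images), which requires real algebraic geometry and cannot be obtained by an argument of this kind; your proof makes that distinction visible, which the paper's bare citation obscures. Two cosmetic remarks: the statement itself has a typo ($\R^l$ should read $\R^m$), and you may wish to note explicitly that a generator $\bigl\{ g>0 \bigr\}$ is itself a basic piece with an empty (or trivial) set of equations, so it lies in your family $\mathcal{A}$ by definition.
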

\begin{proof}
See Seidenberg \cite{Seidenberg1954}.
\end{proof}

\begin{theorem}[Tarski- Seidenberg]\label{Tar-Sei}
The image of a semialgebraic subset in $\R^m$ under a polynomial mapping from $\R^m$ into $\R^n$ is a semialgebraic subset in $\R^n$.
\end{theorem}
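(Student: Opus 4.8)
The plan is to reduce the statement to a single instance of \emph{quantifier elimination} and then to invoke the algebraic machinery that makes that elimination effective. First I would observe that the polynomial map may be replaced by a coordinate projection. Given a polynomial map $F=(F_1,\dots,F_n)\colon\R^m\to\R^n$ and a semialgebraic set $C\subseteq\R^m$, the graph
\[
\Gamma=\set{(x,y)\in\R^m\times\R^n\taleche x\in C,\ y_1=F_1(x),\dots,y_n=F_n(x)}
\]
is semialgebraic, since it is cut out by the defining (in)equalities of $C$ together with the $n$ polynomial equations $y_j-F_j(x)=0$. Because $F(C)$ is exactly the image of $\Gamma$ under the projection $\R^{m+n}\to\R^n$ onto the $y$-variables, it suffices to prove that the projection of a semialgebraic set is semialgebraic. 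Eliminating one variable at a time, the whole theorem reduces to the following claim: the projection $\pi\colon\R^{k+1}\to\R^k$ forgetting the last coordinate sends semialgebraic sets to semialgebraic sets.

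To establish this claim I would argue at the level of formulas. By Theorem~\ref{th:semialg} a semialgebraic set $S\subseteq\R^{k+1}$ is described by a Boolean combination $\Phi(x,t)$ of polynomial sign conditions in the variables $x=(x_1,\dots,x_k)$ and $t$, and its projection is the set of $x$ satisfying $\exists t\,\Phi(x,t)$. The goal is therefore to show that this existentially quantified formula is equivalent to a quantifier-free formula in $x$ alone, which by Theorem~\ref{th:semialg} again defines a semialgebraic set. Writing the finitely many polynomials appearing in $\Phi$ as $P_1,\dots,P_s\in\R[x][t]$, whether a witnessing $t$ exists for a given $x$ depends only on which \emph{sign conditions} $(\operatorname{sign}P_1(x,t),\dots,\operatorname{sign}P_s(x,t))$ are realized by some real $t$. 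The heart of the matter is to prove that the set of realizable sign conditions is itself governed by the signs of finitely many auxiliary polynomials in $x$.

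This last step is the main obstacle, and it is where the genuine real-algebraic content lies. The auxiliary polynomials are produced from the $P_j$ by purely algebraic operations in $t$: leading coefficients, discriminants, resultants and, more systematically, subresultant (Sturm--Habicht) sequences. The point I would have to make precise is that the number and relative order of the real roots of each $P_j(x,\cdot)$, together with the signs of the remaining $P_i$ at those roots and on the intervening intervals, can be read off --- via Sturm's theorem and sign-determination for subresultants --- from the signs of these auxiliary polynomials, whose coefficients are themselves polynomials in $x$. A careful bookkeeping then shows that $\exists t\,\Phi(x,t)$ is equivalent to a finite Boolean combination of sign conditions on those auxiliary polynomials, completing the elimination of $t$. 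The combinatorial cost of this bookkeeping, and the verification that the subresultant data behave uniformly as $x$ varies --- in particular across the loci where leading coefficients vanish and the degree in $t$ drops --- is the delicate part; the original arguments of Tarski and Seidenberg handle it by an intricate induction on the degrees in $t$, and I would follow the complete treatment in Seidenberg \cite{Seidenberg1954}.
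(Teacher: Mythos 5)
The paper proves this theorem by citation alone --- its entire proof reads ``See Seidenberg \cite{Seidenberg1954}'' --- so your proposal cannot diverge from it in substance; it can only add detail, and the detail you add is correct. The graph trick reducing a general polynomial map to a coordinate projection, the reduction to eliminating one variable at a time, and the reformulation of single-variable elimination as the question of which sign conditions on the polynomials $P_j(x,\cdot)$ are realizable by some real $t$ are all standard and sound, and they are exactly the scaffolding on which the Tarski--Seidenberg argument rests. The one genuinely hard step --- that realizability of a sign condition is itself governed by sign conditions on auxiliary polynomials in $x$ (leading coefficients, discriminants, subresultant data), uniformly across the loci where degrees in $t$ drop --- is the part you explicitly defer to \cite{Seidenberg1954}, the very source the paper cites; so relative to the paper's own standard of proof there is no gap, though as a self-contained argument yours, like the paper's, remains an outline whose core lives in the reference.
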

\begin{proof}
See Seidenberg \cite{Seidenberg1954}.
\end{proof}

\subsection{$r$th order conditions}

The $r$-\emph{jet} of a smooth ($C^\infty$) function $f:\R\To\R$ with source in $x_0$ and target in $f(x_0)$, is the equivalence class of the Taylor expansion of the function $f$ in $x_0$, arrested at the degree $r$. The $r$-jet of $f$ at $x_0$ is denoted by $j^r f(x_0)$ and can be identified with the Taylor expansion of $f$:
\begin{equation}
	j^r f(x_0)(x) = f(x_0) + f'(x_0)(x-x_0) + \dots+ \frac{f^{(r)}(x_0)}{r!}(x-x_0)^r.
\end{equation}
This definition can be generalized naturally to the mappings between euclidean spaces. 
Let $J^r(n,m)$ be the space of $r$-jets from $\R^n$ to $\R^m$, with source and target in $0$:
\[
J^r(n,m):=  \bigl \{ j^{r}f=j^rf(0) \big | f : \R^n \rightarrow \R^m, \, f(0)=0, \, \text{of class} \; C^r \bigr \}.
\]
This space is called Jet Bundle and we have that $J^r(n,m)$ is a vector space and that there exists a linear isomorphism given by
\begin{align*}
J^r (n,m) & \stackrel{\cong }{\To} B^r_{n,m} \\
j^rf &\longmapsto (j^r f_1,...,j^r f_m)
\end{align*}
where $B^r_{n,m}$ is the vector space of polynomial mappings from $\R^n$ to $\R^m$ with deg $\leq r$.
\begin{definition}
A subset $C \subset J^r(n,m)$ is called a \emph{$r$th-order condition} provided the set $C$ is invariant under the group of $C^r$ diffeomorphisms around $0 \in \R^n$. Moreover, if the set $C$ is semialgebraic we say that it is a $r$th-order algebraic condition.
\end{definition}

Now we generalize the definition of Jet Bundle to smooth manifolds, for further details and proofs the reader is referred to \cite{Golubitsky:1973uj}.

\begin{definition}
Let $X,Y$ be smooth manifolds, $p \in X$ and $r>1$. Let $f,g: X \rightarrow Y$ be $C^\mu$ ($\mu > r$) mappings such that $f(p)=g(p)=q$.
\begin{enumerate}
\item $f \sim_1 g$ at $p$ $\overset{def}{\Leftrightarrow}$ $df(p)=dg(p)$ as linear mappings $T_pX \rightarrow T_qY$.
\item $f \sim_r g$ at $p$ $\overset{def}{\Leftrightarrow}$ $df(p) \sim _{r-1} dg(p)$ at every point of $T_pX$.
\item Let $J^r(X,Y)_{p,q}$ be the set of the equivalence classes under the relation ``$\sim_r$ at $p$" of mappings $f:X \rightarrow Y$, with $f(p)=q$.
\item Set $J^r(X,Y) = \bigsqcup_{(p,q) \in X \times Y} J^r(X,Y)_{p,q}$. An element $z \in J^r(X,Y)$ is called \emph{$r$-jet}.
\item Let $z$ be a $r$-jet. Then there exist $p \in X$ and $q\in Y$ such that $z \in J^r(X,Y)_{p,q}$ : $p$ is called \emph{source of $z$} and $q$ is called \emph{target of $z$}.
\end{enumerate}
\end{definition}

Note that given $f:X\rightarrow Y$ is defined canonically the mapping
\[
\begin{split}
j^rf: \, &X \rightarrow J^r(X,Y)\\
&p \mapsto j^rf(p)\text{ = the equivalence class of $f$ in $J^r(X,Y)_{p,f(p)}$}
\end{split}
\]

\begin{lemma}
Let $U \subset \R^n$ be an open set, $p \in U$. Let $f,g: \, U \rightarrow \R^m$ be smooth mappings. Then
\[
f \sim_r g \, \Leftrightarrow \, \frac{\partial^{|\alpha|}f_i}{\partial x^\alpha}(p)= \frac{\partial^{|\alpha|}g_i}{\partial x^\alpha}(p) \quad \forall \alpha \in \N^n_0,\, |\alpha|\leq r, \, \forall i=1,...,n
\]
\end{lemma}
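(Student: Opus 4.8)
The plan is to argue by induction on $r$, using the inductive definition of $\sim_r$ together with the canonical trivializations $TU \cong U \times \R^n$ and $T\R^m \cong \R^m \times \R^m$, which let me write the tangent maps explicitly in coordinates and then reduce the $r$th-order statement to the $(r-1)$th-order statement for those tangent maps. Throughout I take the index to run $i = 1,\dots,m$ (the target dimension), and I treat the standing hypothesis $f(p)=g(p)=q$, under which $\sim_r$ is defined, as supplying the $|\alpha|=0$ case.

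For the \textbf{base case} $r=1$, the definition says $f \sim_1 g$ at $p$ means $df(p)=dg(p)$ as linear maps $T_pU \to T_q\R^m$. Since these linear maps are represented by the Jacobian matrices, equality is precisely the agreement of all first-order partials $\partial f_i/\partial x_j(p) = \partial g_i/\partial x_j(p)$; together with $f_i(p)=g_i(p)$ this is exactly agreement of all partials of order $\leq 1$, which is the claim for $r=1$.

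For the \textbf{inductive step}, assume the equivalence holds at order $r-1$ for smooth maps between open subsets of arbitrary Euclidean spaces. Writing coordinates $(x,\xi)$ on $TU = U \times \R^n \subseteq \R^{2n}$ and $(y,\eta)$ on $T\R^m = \R^m \times \R^m$, the tangent maps are
\[
Tf(x,\xi) = \bigl(f(x),\, Df(x)\,\xi\bigr), \qquad Tg(x,\xi) = \bigl(g(x),\, Dg(x)\,\xi\bigr),
\]
smooth maps $TU \to \R^{2m}$. By definition $f \sim_r g$ at $p$ holds iff $Tf \sim_{r-1} Tg$ at every point $(p,\xi)$ of the fibre $T_pU$. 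Applying the inductive hypothesis to $Tf, Tg$ at $(p,\xi)$, this is equivalent to requiring, for every $\xi \in \R^n$, the agreement of all partial derivatives in the $2n$ variables $(x,\xi)$ of order $\leq r-1$ of the $2m$ components. I would then decode this: the first $m$ components $f_i(x)$, $g_i(x)$ are independent of $\xi$, so their partials up to order $r-1$ yield agreement of the $x$-derivatives of $f_i,g_i$ up to order $r-1$ (the $|\beta|=0$ case also recovering $f(p)=g(p)$). The last $m$ components $\sum_j (\partial f_i/\partial x_j)(x)\,\xi_j$ are linear in $\xi$; taking a pure $x$-derivative $\partial^\gamma_x$ with $|\gamma|=r-1$ gives $\sum_j (\partial^{r} f_i/\partial x^\gamma \partial x_j)(x)\,\xi_j$, and demanding equality for all $\xi$ forces $\partial^{r} f_i/\partial x^\gamma \partial x_j(p) = \partial^{r} g_i/\partial x^\gamma \partial x_j(p)$. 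As $\gamma+e_j$ ranges over all multi-indices of order $r$, this upgrades the agreement to all $x$-derivatives of order $r$. Collecting the two families gives agreement of all $x$-derivatives of $f_i,g_i$ up to order $r$, completing the induction.

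The \textbf{main obstacle} is the correct reading of the recursive definition: that ``$df(p) \sim_{r-1} dg(p)$ at every point of $T_pX$'' refers to the full tangent maps $Tf, Tg$ being $(r-1)$-equivalent along the whole fibre, not merely to the linear maps $df(p), dg(p)$. The essential bookkeeping is then to see that the universal quantifier over $\xi$, combined with the affine dependence on $\xi$ of the fibre components, is exactly what promotes order $r-1$ to order $r$. I would verify that no condition is lost or double counted --- in particular that derivatives in the $\xi$-directions of order $\geq 2$ contribute nothing (the components being affine in $\xi$), so that the only new information beyond order $r-1$ comes from the single mechanism just described.
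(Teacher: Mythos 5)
Your proposal is correct, and it coincides with the proof the paper implicitly relies on: the paper states this lemma without proof, deferring to Golubitsky--Guillemin \cite{Golubitsky:1973uj}, and your induction on $r$ --- reading the recursive definition as $(r-1)$-equivalence of the tangent maps $Tf, Tg$ along the fibre $T_pU$ in the canonical trivializations, then using the affine dependence on $\xi$ and the universal quantifier over $\xi$ to promote order-$(r-1)$ agreement to order-$r$ agreement --- is exactly the standard argument given in that reference. You also rightly fix the statement's index typo (the components should run $i=1,\dots,m$, the target dimension, not $n$).
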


\begin{corollary}\label{cor:taylor}
$f,g : U \rightarrow \R^m$ are such that $f \sim_r g$ at $p$ $\Leftrightarrow$ the Taylor expansions of $f$ and $g$ up to order $r$ are identical at $p$.
\end{corollary}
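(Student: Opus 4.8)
The plan is to deduce this corollary directly from the preceding Lemma, which already identifies the relation $\sim_r$ with the coincidence at $p$ of all partial derivatives up to order $r$. The only extra ingredient needed is the classical multivariate Taylor formula together with the elementary fact that a polynomial is determined by its coefficients, so the corollary is really just a reformulation of the Lemma in the language of Taylor polynomials.

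First I would write out, for each component $f_i$, the Taylor polynomial of order $r$ centered at $p$ in multi-index notation,
\[
T^r_p f_i(x) = \sum_{|\alpha| \le r} \frac{1}{\alpha!}\, \frac{\partial^{|\alpha|} f_i}{\partial x^\alpha}(p)\,(x-p)^\alpha,
\]
and collect these into the vector-valued expansion $T^r_p f = (T^r_p f_1,\dots,T^r_p f_m)$, and similarly $T^r_p g$ for $g$. The decisive observation is that the coefficients appearing here are exactly the quantities $\frac{\partial^{|\alpha|} f_i}{\partial x^\alpha}(p)$ occurring in the statement of the Lemma, so that reading off coefficients translates between the two formulations.

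Next I would argue the equivalence of coefficients and functions. The monomials $(x-p)^\alpha$ with $|\alpha|\le r$ form a basis of the vector space of polynomial maps of degree at most $r$; hence the truncated Taylor polynomials $T^r_p f$ and $T^r_p g$ agree as functions if and only if their coefficients agree componentwise, i.e.\ if and only if $\frac{\partial^{|\alpha|} f_i}{\partial x^\alpha}(p) = \frac{\partial^{|\alpha|} g_i}{\partial x^\alpha}(p)$ for every $i$ and every multi-index $\alpha$ with $|\alpha|\le r$. Chaining this with the Lemma, which equates the latter condition with $f \sim_r g$ at $p$, yields both implications simultaneously.

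There is essentially no hard step: once the Lemma is in hand the content is purely notational. The only point demanding a little care is fixing the meaning of ``the Taylor expansions are identical'' as equality of the truncated Taylor polynomials (equivalently, equality of their coefficients), so that the basis property of the monomials can legitimately be invoked to pass between equality of functions and equality of derivatives.
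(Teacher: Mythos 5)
Your proof is correct and follows exactly the route the paper intends: the paper states this as an immediate corollary of the preceding Lemma (it gives no separate proof), and your argument---identifying Taylor coefficients with the partial derivatives in the Lemma and invoking the basis property of monomials to pass between equality of polynomials and equality of coefficients---is precisely the spelled-out version of that implicit deduction.
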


\begin{proposition}
Let $X,Y$ be smooth manifolds of dimension $n$ and $m$ respectively. Then $J^r(X,Y)$ is a fiber bundle over $X\times Y$ with fiber $B^r_{n,m}$.
\end{proposition}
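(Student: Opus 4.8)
The plan is to exhibit an explicit projection together with local trivializations built out of coordinate charts, and then to reduce everything to the Euclidean isomorphism $J^r(n,m)\cong B^r_{n,m}$ already recorded above. First I would define the projection $\pi:J^r(X,Y)\To X\times Y$ by sending each $r$-jet $z$ to its source--target pair $(p,q)$; this is well defined by item (5) of the preceding definition, and by construction the fiber $\pi^{-1}(p,q)$ is exactly $J^r(X,Y)_{p,q}$. It then remains to produce local trivializations with fiber $B^r_{n,m}$ whose transition maps are smooth.

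Next I would fix charts $(U,\phi)$ on $X$ with $\phi:U\To\R^n$ and $(V,\psi)$ on $Y$ with $\psi:V\To\R^m$, and define a bijection $\Phi_{U,V}:\pi^{-1}(U\times V)\To (U\times V)\times B^r_{n,m}$ as follows. Given $z\in J^r(X,Y)_{p,q}$ with $p\in U$ and $q\in V$, represent $z$ by a map $f$, translate $\phi$ and $\psi$ so that $\phi(p)=0$ and $\psi(q)=0$, form the composite $\psi\circ f\circ\phi^{-1}:\R^n\To\R^m$ fixing the origin, take its $r$-jet in $J^r(n,m)$, and send it into $B^r_{n,m}$ through the linear isomorphism stated above; finally pair this polynomial with $(p,q)$. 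By Corollary \ref{cor:taylor} the $r$-jet of a map depends only on its Taylor expansion up to order $r$, so $\Phi_{U,V}$ is independent of the chosen representative $f$ and hence well defined; it is a bijection because every polynomial of degree $\leq r$ arises as such a Taylor expansion and the construction can be run backwards.

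The heart of the argument --- and the step I expect to be the main obstacle --- is to check that the transition maps between two such trivializations are smooth. Passing from charts $(U,\phi),(V,\psi)$ to overlapping charts $(U',\phi'),(V',\psi')$ replaces the representative $\psi\circ f\circ\phi^{-1}$ by $(\psi'\circ\psi^{-1})\circ(\psi\circ f\circ\phi^{-1})\circ(\phi\circ\phi'^{-1})$, that is, it pre- and post-composes with the transition diffeomorphisms of $X$ and $Y$. By the chain rule (Fa\`a di Bruno's formula), the order-$r$ Taylor coefficients of the new composite are \emph{polynomial} functions of the order-$r$ Taylor coefficients of the old one, with coefficients given by the derivatives up to order $r$ of the transition diffeomorphisms evaluated at $p$ and $q$, hence depending smoothly on the base point $(p,q)$.

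It follows that the transition map $\Phi_{U',V'}\circ\Phi_{U,V}^{-1}$ has the form $(p,q,P)\mapsto(p,q,Q(p,q,P))$, where $Q$ depends smoothly on $(p,q)$ and, for each fixed $(p,q)$, is a polynomial automorphism of $B^r_{n,m}$ whose inverse is again polynomial (it comes from the inverse chart changes); in particular the transition is a diffeomorphism. Declaring the maps $\Phi_{U,V}$ to be diffeomorphisms then endows $J^r(X,Y)$ with a smooth structure for which $\pi$ is a locally trivial fiber bundle over $X\times Y$ with fiber $B^r_{n,m}$, as claimed. The only genuinely delicate point is the bookkeeping in the chain-rule computation; everything else is a formal consequence of Corollary \ref{cor:taylor} and the Euclidean identification $J^r(n,m)\cong B^r_{n,m}$.
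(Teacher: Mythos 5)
Your proof is correct and is essentially the canonical argument: the paper itself gives no proof of this proposition, deferring instead to the cited reference of Golubitsky and Guillemin, and your construction --- the source--target projection, chart-induced local trivializations via the identification $J^r(n,m)\cong B^r_{n,m}$, and smoothness of the polynomial transition maps by the chain rule (Fa\`a di Bruno) --- is exactly the proof found there. No gap; the only quick step, well-definedness of the trivialization on jet classes, is indeed covered by the coordinate characterization of $\sim_r$ in Corollary \ref{cor:taylor} combined with the same chain-rule bookkeeping you already invoke for the transitions.
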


Let $W$ be a $C^\mu$ ($\mu>r$) compact manifold without boundary of dimension $n$. We have that $J^r(W,\R^m)$ is a vector bundle with fiber $B^r_{n,m} \cong J^r(n,m)$. \\
If $A \subset J^r(n,m)$ is a $r$th-order algebraic condition, then we can define a \emph{semialgebraic subbundle} of the vector bundle $J^r(W,\R^m)$: is the subbundle with fiber $A$ and we will denote it with $A(W,\R^m)$.
\begin{definition}
A $C^{\mu}$ map $f:W\rightarrow \R^m$ is said to satisfy a $r$th-order algebraic condition $A$ ($\mu > r$) if $j^rf(W) \subset A(W,\R^m)$. 
\end{definition}
An $r$-th algebraic condition is generic if is satisfied by a residual subset of $C^{\mu}(W,\R^m)$.

\subsection{Stratifications}
Roughly speaking, a stratification of a set $A$ is a partition of $A$ into manifolds fitting togheter regularly.
Now we give a precise mathematical definition for two submanifold of $R^n$ to have ``nice'' intersections, it is based on the idea that if we have two submanifold $X$ and $Y$ with $Y \subset Cl(X)$, then irregularity of their intersection at a point $y \in Y$ corresponds to the existence on nonunique limit of tangent planes $T_xX$ as $x$ approches to $y$. For further details the reader is referred to Mather \cite{Mather:1973fk, Mather:2012gp}. \\
Let $X,Y$ be $C^\mu$ submanifolds of $\R^n$.
\begin{definition}[Whitney's condition $b$]
 We say that the pair $(X,Y)$ \emph{satisfies Whitney's condition $b$ at $y$} if for each sequence $(x_k)_{k\in\N}\subset X$ and each sequence $(y_k)_{k\in\N}\subset Y$, both converging to $y\in Y$, such that the sequence of secant lines $(\overline{x_ky_k})_{k\in\N}$ converges to some line $l\subset \R^n$ (in the projective space $\mathbb{P}^{n-1}$) and the sequence $(T_{x_k}X)_{k\in\N}$ converges to some plane $\tau\subset\R^n$, we have that $l\subset \tau$. 
\end{definition}

Let $W$ be a $C^\mu$ manifold without boundary and $A$ be a subset of $W$. 
\begin{definition}[Stratifications] A \emph{stratification} $\mathcal{S}$ of $A$ is a cover of $A$ by pairwise disjoint submanifolds of $W$ which lie in $A$. The elements of $\mathcal{S}$ are called \emph{strata}.
$\mathcal{S}$ is said \emph{locally finite} if each point in $A$ has a neighborhood which meets only a finite number of strata. $\mathcal{S}$ satisfies the \emph{condition of the frontier} if for each stratum $\sigma$ of $\mathcal{S}$, $\partial \sigma \cap A$ is a union of strata in $\mathcal{S}$. 
\end{definition}

\begin{definition}[Whitney stratifications]
A stratification $\mathcal{S}$ of $A$ is said a \emph{Whitney stratification} if it is locally finite, it satisfies the condition of the frontier and if $(\sigma,\rho)$ satisfies Whitney's condition $b$ for any pair $(\sigma,\rho)$ of strata of $\mathcal{S}$.
\end{definition}

\begin{proposition}\label{prop:back_stratification}
Let $X,Y$ be $C^r$ manifolds and $A \subset Y$ closed set with a Whitney stratification $\mathcal{S}$. Let $f:X \rightarrow Y$ be a $C^{\mu}$ map transversal to $\mathcal{S}$ (with $\mu > r+1$). Define
\[
f^*\mathscr{S}= \bigr \{ \text{Connected components of } f^{-1}(\sigma) \big | \sigma \in \mathscr{S} \bigl \}
\]
Then $f^*\mathscr{S}$ is a Whitney stratification of $f^{-1}(A)$.
\end{proposition}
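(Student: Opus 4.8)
The plan is to check, one by one, the three properties in the definition of a Whitney stratification---partition into submanifolds, local finiteness together with the condition of the frontier, and Whitney's condition $b$---each time deducing the property upstairs from the same property of $\mathcal{S}$ downstairs together with the transversality of $f$. First I would record the submanifold structure. Since $f$ is transverse to every stratum $\sigma\in\mathcal{S}$, the transversal preimage theorem (\cite{Golubitsky:1973uj}) shows that each $f^{-1}(\sigma)$ is a $C^r$ submanifold of $X$ with $\operatorname{codim}_X f^{-1}(\sigma)=\operatorname{codim}_Y\sigma$, and that at every $x\in f^{-1}(\sigma)$
\[
T_x f^{-1}(\sigma)=(df_x)^{-1}\bigl(T_{f(x)}\sigma\bigr).
\]
The hypothesis $\mu>r+1$ guarantees both that this preimage inherits the $C^r$ regularity of $\sigma$ and that these tangent planes vary with enough continuity to discuss their limits later. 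Taking connected components keeps each piece a submanifold, and disjointness is automatic because the strata of $\mathcal{S}$ are disjoint and $f$ is a map; hence $f^*\mathcal{S}$ is a partition of $f^{-1}(A)$ into connected $C^r$ submanifolds.

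Local finiteness is then routine: given $x\in f^{-1}(A)$, choose a neighbourhood $V$ of $f(x)$ meeting only finitely many strata of $\mathcal{S}$, so that $f^{-1}(V)$ meets only the preimages of those finitely many strata, and after shrinking it meets only finitely many of their connected components. For the condition of the frontier I would not argue directly; instead I would exploit that the strata of $f^*\mathcal{S}$ are connected by construction and invoke the standard fact (\cite{Mather:1973fk}) that a locally finite partition into connected submanifolds satisfying condition $b$ automatically satisfies the condition of the frontier. This reduces the whole proposition to verifying condition $b$, which is the real content.

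For condition $b$ I would push the sequences forward through $f$. Let $\sigma'$ and $\rho'$ be strata of $f^*\mathcal{S}$ with $\rho'\subseteq\mathrm{Cl}(\sigma')$, arising as components of $f^{-1}(\sigma)$ and $f^{-1}(\rho)$ with $\rho\subseteq\mathrm{Cl}(\sigma)$; fix $y\in\rho'$ and sequences $x_k\in\sigma'$, $y_k\in\rho'$ converging to $y$, with unit secant directions converging to a line $\ell$ and $T_{x_k}\sigma'\to\tau$. The goal is $\ell\subseteq\tau$. Set $u_k=f(x_k)\in\sigma$ and $v_k=f(y_k)\in\rho$, both converging to $f(y)\in\rho$, and pass to a subsequence so that $T_{u_k}\sigma$ converges to some plane $\tau_Y$ in the relevant Grassmannian. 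A first-order expansion gives
\[
f(x_k)-f(y_k)=df_y(x_k-y_k)+o\bigl(\|x_k-y_k\|\bigr),
\]
so as long as $df_y(\ell)\neq 0$ the downstairs secant directions converge to the line $df_y(\ell)$, and condition $b$ for the pair $(\sigma,\rho)$ at $f(y)$ yields $df_y(\ell)\subseteq\tau_Y$. Lifting this back through the limiting identity $\tau=\lim_k (df_{x_k})^{-1}(T_{u_k}\sigma)$ places the fibre-transverse component of $\ell$ inside $\tau$.

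The genuine obstacle is the fibre direction. When $x_k-y_k$ becomes asymptotically tangent to the fibres of $f$, the image $df_y(x_k-y_k)$ collapses and condition $b$ downstairs says nothing about $\ell$. Here I would use that $\ker df_{x_k}\subseteq T_{x_k}\sigma'$ for every $k$---immediate from the tangent-space formula---so that any limiting direction coming from the fibres lies in $\tau$ automatically; the difficulty is that the rank of $df$ may drop at $y$, so $\ker df_y$ is only upper semicontinuous and need not be captured by $\lim_k\ker df_{x_k}$. I expect the crux to be a rescaling/blow-up of the secants $x_k-y_k$ that splits each into a component along the fibres and a component transverse to them, analyses the two limiting lines separately, and rules out the degenerate case in which both normalized components survive and interact. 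Once this splitting is made uniform, the transverse part is controlled by condition $b$ downstairs and the fibre part by the kernel inclusion, and together they give $\ell\subseteq\tau$; everything else in the proposition then follows formally from the preimage tangent formula and the properties of $\mathcal{S}$.
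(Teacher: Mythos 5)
You have set up the correct framework (transverse--preimage tangent formula, pushing sequences down through $f$, reducing the frontier condition to condition $b$ via connectedness of the strata), but there is a genuine gap exactly where you flag it: condition $b$ is never actually verified, and the plan you sketch for closing it points in the wrong direction. Both of your cases are incomplete. In Case 1, ``lifting back through the limiting identity $\tau=\lim_k (df_{x_k})^{-1}(T_{u_k}\sigma)$'' is an unproved step --- a limit of preimages of planes need not be the preimage of the limit plane; in Case 2 you defer to a ``rescaling/blow-up'' that is never carried out, and which is not how the difficulty is resolved. The missing ingredient is a fact you never invoke: transversality of $f$ to the \emph{limit} stratum $\rho$ at $y$ itself, combined with Whitney's condition $a$ for the pair $(\sigma,\rho)$ at $f(y)$ (condition $a$, which follows from condition $b$, states that $T_{f(y)}\rho\subseteq\tau_Y$ for any limit $\tau_Y$ of tangent planes $T_{u_k}\sigma$). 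Indeed, $f\transv\rho$ at $y$ gives $\im df_y+T_{f(y)}\rho=T_{f(y)}Y$, hence $\im df_y+\tau_Y=T_{f(y)}Y$, so $(df_y)^{-1}(\tau_Y)$ has dimension $\dim X-\operatorname{codim}_Y\sigma=\dim\tau$. Since every $v=\lim_k v_k$ with $v_k\in T_{x_k}\sigma'=(df_{x_k})^{-1}(T_{u_k}\sigma)$ satisfies $df_y(v)=\lim_k df_{x_k}(v_k)\in\tau_Y$, one has $\tau\subseteq (df_y)^{-1}(\tau_Y)$, and equality of dimensions forces
\[
\tau=(df_y)^{-1}(\tau_Y).
\]
With this identity both cases are immediate: if $df_y(w)\neq 0$, condition $b$ downstairs gives $df_y(w)\in\tau_Y$; if $df_y(w)=0$, then $df_y(w)\in\tau_Y$ trivially. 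Either way $w\in\tau$. No blow-up, no splitting of secants, and no control of $\lim_k\ker df_{x_k}$ is needed: your (correct) worry about upper semicontinuity of the kernel dissolves because $\tau$ equals the full preimage plane $(df_y)^{-1}(\tau_Y)$, which contains $\ker df_y$ automatically.

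Two further remarks. First, your local-finiteness step is also not ``routine'': a submanifold of $X$ may have infinitely many connected components meeting every neighborhood of a point, and transversality alone does not exclude this for $f^{-1}(\sigma)$ near a point of $f^{-1}(\rho)$; the standard proofs obtain it from the local structure of Whitney stratified sets, not from the shrinking argument you give. Second, the paper itself does not prove the proposition but cites Mather \cite{Mather:2012gp}, whose argument differs from yours: one replaces $f$ by its graph embedding $x\mapsto(x,f(x))$, which is transverse to the product stratification $X\times\mathcal{S}$ of $X\times A$, reducing everything to the intersection of a Whitney stratified set with a transverse submanifold $M$. There the identity above becomes $\tau=T_yM\cap\tau_Y$, and since all secants stay inside $M$, the degenerate ``fibre direction'' you struggle with never appears. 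Your direct approach can be completed, but only by supplying the key lemma (condition $a$ downstairs plus transversality at the limit point) that your sketch omits.
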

\begin{proof}
See Mather \cite{Mather:2012gp}.\phantom{\qed}
\end{proof}
\subsection{Stratifications of semialgebraic sets}
Consider a semialgebraic set $C\subset J^p(n,m)$, as we said above :
\[
C= \bigcup \limits_{j=1}^n K_j \qquad K_j= \bigl \{ f_{_{i,j}}=0, \; g_{_{h,j}}>0, \quad i=1,\ldots,l, \quad h=1,\ldots,k \bigr \}
\] 
(where $f_{_{i,j}}$ and $g_{_{h,j}}$ are polynomial functions.)\\
Every semialgebraic set admits a Whitney's stratification (see Mather \cite{Mather:1973fk}), whose strata are submanifolds determined by the polynomial inequalities defining the set. We denote with $\mathcal{S}_0$ this Whitney's stratification. \\
The closure $\mathrm{Cl}(C)$ is still semialgebraic, is defined by converting all the strict inequalities in the definition of $C$ in weak inequalities. Thus also $\mathrm{Cl}(C)$ is stratified: the strata are defined by the polynomial inequalities, we call this last stratification $\widehat{\mathcal{S}_0}$.\\
Note that $\mathcal{S}_0 \subset \widehat{\mathcal{S}_0}$ but this second one is not merely the topological closure of the first: $\widehat{\mathcal{S}_0}$ contains the strata of $\mathcal{S}_0$ and other strata in addition, which appear by converting each strict inequality to a weak inequality.
Hence $C$ is a \emph{Whitney's substratified} subset of $\mathrm{Cl}(C)$, i.e., is union of strata of $\mathrm{Cl}(C)$. \\
Now assume that $C\subset J^r(n,m)$ is a $r$-th order algebraic condition and let let $W$ be a $C^\mu$ compact manifold without boundary of dimension $n$. Then we can define $C(W,\R^m)$, semialgebraic subbundle of $J^r(W,\R^m)$ with fiber $C$.\\
Note that $C(W,\R^m)$ is not a smooth subbundle, since $C$ is not a smooth submanifold, but it is a stratified set: $\mathcal{S}_0$ induces a Whitney stratification $\mathcal{S}$ on $C(W,\R^m)$.\\
Moreover denote with $\widehat{C}(W,\R^m)$ the semialgebraic subbundle with fiber $\mathrm{Cl}(C)$ (note that this is not the topological closure of $C(W,\R^m)$). Then $\widehat{\mathcal{S}_0}$ induces a Whintey's stratification $\widehat{\mathcal{S}}$ on $\widehat{C}(W,\R^m)$. \\
Finally we have that $\mathcal{S} \subset \widehat{\mathcal{S}}$, i.e. $C(W,\R^m)$ is a substratified subset of $\widehat{C}(W,\R^m)$. 

\subsection{Algebraic conditions for Pareto optimality}
Throughout the following sections, let $W$ be a $C^\mu$ compact manifold without boundary of dimension $n$.

In section we show an algebraic necessary and sufficient condition for Pareto optimality. The main reference is Wan \cite{Wan:1977yi}.
\begin{definition}
An $r$-jet $z \in J^r(n,m)$ is called \emph{v-sufficient} if for any two realizations $f$,$g$ of $z$, $f^{-1}(0)$ is homeomorphic to $g^{-1}(0)$ near $0$.
(We call $f$ a realization of $j^rf$).
\end{definition}
Recall that an $r$-jet $z \in J^r(n,m)$ can be represented as a polynomial mapping in the variables $x=(x_1,\ldots,x_n)$ of degree less or equal to $r$. We can give the following characterization theorem of v-sufficiency of $r$-jets.
\begin{theorem}\label{th:kuo}
For a given jet $z=(z_1,\ldots,z_m) \in J^r(n,m)$, the following conditions are equivalent:
\begin{itemize}
\item $z$ is $v$-sufficient in $C^r$.
\item There exists $\epsilon>0$ such that $d(\nabla z_1(x), \ldots, \nabla z_m(x))\geq |x|^{r-1}$ for $|z(x)|\leq\epsilon|x|^r$ and $|x|<\epsilon$. 
\item Given any $C^r$ realization $f$ of $z$, $\nabla f_1(x),\ldots,\nabla f_m(x)$ are linearly independent on $f^{-1}(0)\setminus\{0\}$, near $0$.
\end{itemize}
(we set $d(v_1,\ldots v_m)$=min$\{h_1,\ldots,h_m\}$, where $h_i$=distance from $v_i$ to the subspace spanned by the vectors $v_j$, $j\neq i$.)
\end{theorem}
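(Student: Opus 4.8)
The plan is to close the cycle of implications $(2)\Rightarrow(1)\Rightarrow(3)\Rightarrow(2)$, with the quantitative inequality of (2) serving as the analytic engine and the two ``failure'' directions handled by the curve selection lemma. Throughout I record the basic flatness fact: if $f$ and $g$ are two $C^r$ realizations of the same jet $z$, then by Corollary~\ref{cor:taylor} their Taylor expansions agree up to order $r$, so $f-g=o(|x|^r)$ and $\nabla(f_i-g_i)=o(|x|^{r-1})$ as $x\to 0$; moreover on a zero $f(x)=0$ one has $z(x)=z(x)-f(x)=o(|x|^r)$, so the whole punctured zero set near $0$ lies in the region $\{\,|z(x)|\le\epsilon|x|^r\,\}$ where (2) is in force.

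For $(2)\Rightarrow(1)$ I would connect two realizations $f,g$ of $z$ by the affine homotopy $F(x,t)=(1-t)f(x)+t\,g(x)$, each slice of which is again a realization of $z$, and seek an integrating vector field $X=\partial_t+\sum_i\xi_i(x,t)\,\partial_{x_i}$ whose flow carries $F^{-1}(0)\cap\{t=0\}=f^{-1}(0)$ onto $g^{-1}(0)$. Preservation of the zero set along the flow forces $\partial_t F+DF\cdot\xi=0$, i.e.\ the linear system $DF(x,t)\,\xi=f(x)-g(x)$ in the unknown $\xi\in\R^n$. This is exactly where $d(\nabla z_1,\dots,\nabla z_m)$ is the right gauge: when the $m$ gradients are linearly independent the system is solvable, and expressing $f-g$ through the components of the gradients orthogonal to their mutual spans produces a solution with $|\xi|\le C_m\,|f-g|\,/\,d(\nabla F_1,\dots,\nabla F_m)$ for a constant $C_m$. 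Since $d(\nabla F)\ge |x|^{r-1}$ up to the $o(|x|^{r-1})$ correction coming from flatness, while $|f-g|=o(|x|^r)$, one gets $\xi=o(|x|)$. This vanishing rate makes $X$ integrable and continuous up to the origin, so its time-one flow is a homeomorphism fixing $0$ and identifying the two zero sets; hence $z$ is $v$-sufficient.

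The two remaining implications I would argue contrapositively. For $(1)\Rightarrow(3)$: if some realization $f$ has $\nabla f_1,\dots,\nabla f_m$ linearly dependent at points $x_k\to 0$ of $f^{-1}(0)\setminus\{0\}$, then at such points the $m$ hypersurfaces $\{f_i=0\}$ fail to meet transversally and the germ of $f^{-1}(0)$ is degenerate; adding a flat ($o(|x|^r)$) bump supported near the $x_k$ produces a second realization $g$ whose zero set changes local type (a component is created, split, or annihilated), so $f^{-1}(0)$ and $g^{-1}(0)$ are not homeomorphic and $v$-sufficiency fails. For $(3)\Rightarrow(2)$: if (2) fails for every $\epsilon$, the semialgebraic set $\{\,|z(x)|\le\epsilon|x|^r,\ d(\nabla z(x))<|x|^{r-1}\,\}$ accumulates at $0$ for all $\epsilon$, and the curve selection lemma yields an analytic arc $x(s)\to 0$ along which $|z(x(s))|=o(|x(s)|^r)$ while $d(\nabla z(x(s)))/|x(s)|^{r-1}\to 0$. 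Along this arc the gradients become asymptotically dependent on an approximate zero locus, and I would realize the arc inside the actual zero set of a suitable $g=z+(\text{flat})$, violating (3).

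The main obstacle is the sufficiency step $(2)\Rightarrow(1)$: everything hinges on showing that the vector field $X$ built from the $1/d$-weighted solution $\xi$ is genuinely integrable and that its flow extends \emph{continuously} to the singular point $0$. This is the delicate Thom--Mather integration argument, where the \L{}ojasiewicz-type bound $\xi=o(|x|)$ must be upgraded to uniform control of the flow near $0$; the other two directions are comparatively routine once the curve selection lemma is available.
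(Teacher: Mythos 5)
Your proposed architecture --- the cycle $(2)\Rightarrow(1)\Rightarrow(3)\Rightarrow(2)$, with a Thom--Mather-style vector field for sufficiency and curve-selection plus flat perturbations for the converse directions --- does match the general shape of the known argument; but note first that the paper itself contains no proof of this statement: it is Kuo's characterization theorem and the paper simply cites Kuo \cite{Kuo:1972fk}. Measured against that actual proof, your sketch has genuine gaps exactly where the mathematical weight sits. In $(2)\Rightarrow(1)$ you solve $DF\cdot\xi = f-g$ with $|\xi|\le C_m|f-g|/d(\nabla F_1,\dots,\nabla F_m) = o(|x|)$ and then assert that ``this vanishing rate makes $X$ integrable and continuous up to the origin, so its time-one flow is a homeomorphism.'' That inference fails as stated: $X$ is merely continuous, and continuous vector fields give Peano existence but not uniqueness of integral curves, so there is no well-defined flow at all, let alone one extending to a homeomorphism fixing $0$. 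Moreover the lower bound on $d$ is available only in the horn neighborhood $\{|z(x)|\le\epsilon|x|^r\}$, so $\xi$ is not even defined off that set; it must be cut off, and one must prove the trajectories of the modified field stay inside the horn. Producing a field that is Lipschitz away from the origin (or integrating along explicitly controlled paths) and proving continuity of the resulting map at $0$ is the bulk of Kuo's proof; your final paragraph acknowledges this but defers it, which is deferring the theorem.

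The implication $(1)\Rightarrow(3)$ has a second, unacknowledged gap. You claim that at points $x_k\in f^{-1}(0)\setminus\{0\}$ where the gradients are dependent, ``adding a flat bump supported near the $x_k$ produces a second realization $g$ whose zero set changes local type.'' This does not follow. Linear dependence of the $\nabla f_i$ at a point does not by itself make the zero set topologically unstable there --- $f(x,y)=(x,x)$ has everywhere-dependent gradients and a perfectly smooth zero set --- and a bump supported near a single $x_k\neq 0$ can never change the germ of the zero set at $0$: shrinking the reference neighborhood below $|x_k|$ removes its effect. A correct argument needs perturbations at infinitely many points accumulating at $0$ (with $C^r$-control of the infinite sum, typically pushing along directions orthogonal to the image of $Df(x_k)$ so as to kill the zero set locally while controlling new zeros in the transition annuli), together with a topological invariant --- e.g.\ connectivity or component counts valid in every small ball --- certifying that the two germs are not homeomorphic. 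The step $(3)\Rightarrow(2)$ has the same character: curve selection does apply to the semialgebraic failure locus of the polynomial jet $z$, but converting ``asymptotically dependent gradients along the arc'' into a genuine $C^r$ realization whose zero set contains the arc and whose gradients are exactly dependent there is a quantitative construction, not a remark. In short, the skeleton is right, but each of the three implications is left at the level of plausibility, and the content of Kuo's theorem lies precisely in the steps postponed.
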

\begin{proof}
See Kuo \cite{Kuo:1972fk}.
\end{proof}

Unfortunately, establishing the validity of Kuo conditions, as they are expressed above, is non trivial. The first problem is that the function $d(v_1,\ldots,v_m)$ is  given by an impractical formula. This causes difficulties in the effective evaluation of $d(v_1,\ldots,v_m)$. The second problem is that $d(\nabla z_1(x), \ldots, \nabla z_m(x))$ has to satisfy an inequality in a horn neighbourhood  of of $z^{-1}(0)$ and not in an ordinary neighbourhood of the origin. 

On the other hand, there exists an equivalent formulation \cite{Koz:2010prk} of the v--sufficiency for jets which consists in the computation of the local Lojasiewicz exponent for an associated polynomial function. We discuss this issue in more details in the appendix.

Even if not easily verified, Kuo conditions of Theorem \ref{th:kuo} are naturally semialgebraic and hence using Tarski-Seidenberg theorem one can obtain the following proposition: 
\begin{proposition}\label{th:vsuff_semialg}
The set of all $r$-jets which are not v-sufficient forms a semialgebraic subset of codimension greater or equal to $|n-m|+r$ in $J^r(n,m)$.
\end{proposition}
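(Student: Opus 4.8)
The plan is to separate the two assertions — that the bad set is semialgebraic, and that it has codimension at least $|n-m|+r$ — and to attack them with different tools.

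First I would establish semialgebraicity. By Theorem \ref{th:kuo}, a jet $z=(z_1,\dots,z_m)$ is $v$-sufficient in $C^r$ if and only if there is $\epsilon>0$ such that $d(\nabla z_1(x),\dots,\nabla z_m(x))\ge |x|^{r-1}$ whenever $|z(x)|\le \epsilon|x|^r$ and $|x|<\epsilon$. Written through the coefficients of the $z_i$, every ingredient here is semialgebraic in $(z,x,\epsilon)$: the entries of the gradients are polynomials, the quantities $|x|,|z(x)|$ and the powers $|x|^r,|x|^{r-1}$ enter through the polynomial relations obtained by squaring, and the function $d(v_1,\dots,v_m)=\min_i \operatorname{dist}(v_i,\operatorname{span}_{j\ne i}v_j)$ is a minimum of distances, each of which is semialgebraic in the $v_i$ (cut out by Gram-determinant equalities and inequalities). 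Thus the predicate defining $v$-sufficiency is a first-order formula over the reals with a $\exists\epsilon\,\forall x$ quantifier prefix. Eliminating the quantifiers by the Tarski--Seidenberg theorem (Theorem \ref{Tar-Sei}) shows the set of $v$-sufficient jets is semialgebraic, and since by Theorem \ref{th:semialg} the family of semialgebraic sets is closed under complementation, the non-$v$-sufficient jets $N\subset J^r(n,m)$ are semialgebraic. I would present this step first because it is routine once Theorem \ref{th:kuo} is granted.

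For the codimension bound I would use the third, geometric form of Kuo's criterion: $z$ fails to be $v$-sufficient only if $\nabla z_1,\dots,\nabla z_m$ become linearly dependent on $z^{-1}(0)\setminus\{0\}$ at points accumulating at $0$. The first reduction is that $\operatorname{rank} Dz(0)=\min(n,m)$ already forces $v$-sufficiency: if the linear part has maximal rank the gradients are independent throughout a neighbourhood (resp. $z^{-1}(0)=\{0\}$ locally when $m>n$), so the criterion holds. Hence $N$ is contained in the corank locus, and I would stratify by $\Sigma^i=\{z:\corank Dz(0)=i\}$, whose codimension in $J^r(n,m)$ is the determinantal value $i(|n-m|+i)$. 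For $i=1$ this already gives codimension $|n-m|+1$, which is exactly the asserted bound when $r=1$.

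The remaining — and genuinely hard — point is to upgrade the ``$+1$'' to ``$+r$'' on the low-corank strata, which is necessary because $\Sigma^1$ alone is too large when $r>1$. Here I would invoke a splitting lemma at a corank-$i$ point: after a $C^r$ change of source and target coordinates, $m-i$ components are taken with independent linear parts and split off, leaving a residual jet in the kernel directions whose low-order behaviour alone decides dependence of the gradients along arcs through $0$. The guiding model is $n=m=1$, where $\Sigma^1=\{z'(0)=0\}$ has codimension $1=|n-m|+1$, yet the only non-$v$-sufficient jet is $z\equiv 0$, which forces the remaining $r-1$ coefficients to vanish and hence raises the codimension to $r$; this shows simultaneously that the bound is sharp and that the extra drop comes from requiring the residual jet to vanish to order $r$. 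I would then carry out the bookkeeping showing that the residual degeneracy, combined with the accumulation-at-$0$ condition (which must be handled by the semialgebraic curve selection lemma rather than by a naive incidence count), imposes at least $r-1$ further independent conditions on every corank stratum, so that $\operatorname{codim} N\ge |n-m|+r$; alternatively this count is precisely what is done in Wan \cite{Wan:1977yi}, which I would cite if a self-contained dimension count proved too long. The main obstacle is exactly this residual count — verifying uniformly, across all coranks and through the accumulation condition, that non-$v$-sufficiency is an order-$r$ rather than an order-$1$ phenomenon.
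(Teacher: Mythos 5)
You should know at the outset that the paper offers no argument of its own for Proposition \ref{th:vsuff_semialg}: its entire proof reads ``See Wan \cite{Wan:1977yi}'', and the only methodological hint is the preceding remark that the Kuo conditions of Theorem \ref{th:kuo} are ``naturally semialgebraic'' so that Tarski--Seidenberg yields the claim. Your semialgebraicity half is exactly that route, carried out correctly: the inner predicate of Kuo's criterion is semialgebraic in the coefficients of $z$ together with $(x,\epsilon)$ (the function $d(v_1,\ldots,v_m)$ is first-order definable, the powers $|x|^r$, $|x|^{r-1}$ are handled by squaring), and the prefix $\exists\epsilon\,\forall x$ is eliminated using Theorem \ref{Tar-Sei} for projections together with closure under complements from Theorem \ref{th:semialg}. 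This part is sound, and is in fact more than the paper writes down.

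The codimension half is where your proposal stops short of being a proof. What you actually establish is the bound $|n-m|+1$: maximal rank of the linear part forces $v$-sufficiency (your argument is correct in both cases $n\geq m$ and $n<m$), so the non-sufficient set lies in the corank locus, whose largest stratum $\Sigma^1$ has codimension $1\cdot(|n-m|+1)$. The upgrade from $+1$ to $+r$, which for $r>1$ is the entire content of the proposition, is presented only as a plan --- split off the nondegenerate directions, show the residual jet must degenerate to order $r$, control the accumulation condition by curve selection --- and you never exhibit, on a fixed corank stratum, the additional independent polynomial conditions that non-$v$-sufficiency imposes, nor why the count is uniform across coranks. Your $n=m=1$ computation is a correct sanity check that the bound is sharp, but it does not generalize by itself; the estimate you leave open is precisely the heart of Wan's argument. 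Your explicit fallback of citing \cite{Wan:1977yi} for this count makes your write-up no less rigorous than the paper, which defers the whole proposition to the same source; but judged as a self-contained blind proof, the gap is real and it sits exactly where the difficulty of the statement lies.
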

\begin{proof}
 	See Wan \cite{Wan:1977yi}.
\end{proof}
Given a $r$-jet $z=(z_1,\ldots,z_m) \in J^r(n,m)$ and a non-empty subset $S=(s_1,\ldots,s_a) \subseteq (1,\ldots,m)$, set $z_S=(z_{s_1},\ldots, z_{s_a})\in J^r(n,a)$.
\begin{definition}
Let 
\[
V_r=\bigl\{z \in J^r(n,m)\,\big | \, z_S \, \text{is v-sufficient for any non-empty subset }S \subseteq (1,\ldots,m) \bigr \}.
\]
\end{definition}

\begin{lemma} \label{lm:vr_semialg}
$V_r$ is a semialgebraic set and $codim \, V_r^c \geq max\{n-m,0\} +r$.
\end{lemma}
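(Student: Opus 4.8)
The plan is to exhibit $V_r$ as a finite Boolean combination of semialgebraic sets pulled back along coordinate projections, and then to propagate the codimension bound of Proposition \ref{th:vsuff_semialg} through these projections. For a nonempty subset $S\subseteq(1,\dots,m)$ of cardinality $a=|S|$, the assignment $z\mapsto z_S$ is a surjective linear (hence polynomial) projection $\pi_S:J^r(n,m)\to J^r(n,a)$ that simply forgets the components indexed outside $S$. Write $N_a\subset J^r(n,a)$ for the set of jets that fail to be $v$-sufficient; by Proposition \ref{th:vsuff_semialg} each $N_a$ is semialgebraic of codimension at least $|n-a|+r$. Directly from the definition of $V_r$ we have
\[
V_r^c=\bigcup_{\emptyset\neq S\subseteq(1,\dots,m)}\pi_S^{-1}\bigl(N_{|S|}\bigr),
\]
a union of only finitely many ($2^m-1$) sets.

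First I would settle semialgebraicity. The preimage of a semialgebraic set under a polynomial map is again semialgebraic: each defining inequality $\{g>0\}$ of $N_a$ pulls back to $\{g\circ\pi_S>0\}$, which is again of the admissible form since $g\circ\pi_S$ is a polynomial, and preimage commutes with the Boolean operations. By Theorem \ref{th:semialg}, the class of semialgebraic sets is closed under finite unions and complements, so $V_r^c$ is semialgebraic and therefore so is $V_r$. Note that this step uses only the elementary stability of the semialgebraic class under preimages, not the Tarski--Seidenberg Theorem \ref{Tar-Sei}, which concerns images.

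Next I would compute the codimension. Since $\pi_S$ is a surjective linear map, it is a submersion whose fibers all have the same dimension $(m-a)N$, where $N=\dim J^r(n,1)$; consequently, for any subset $B\subseteq J^r(n,a)$ one has $\dim\pi_S^{-1}(B)=\dim B+(m-a)N$ and hence $\operatorname{codim}_{J^r(n,m)}\pi_S^{-1}(B)=\operatorname{codim}_{J^r(n,a)}B$. Taking $B=N_{|S|}$ and invoking Proposition \ref{th:vsuff_semialg} gives $\operatorname{codim}\pi_S^{-1}(N_{|S|})\geq|n-a|+r$. Because the dimension of a finite union is the maximum of the dimensions of its members, the codimension of $V_r^c$ is the minimum of the individual codimensions, so
\[
\operatorname{codim} V_r^c\ \geq\ \min_{1\leq a\leq m}\bigl(|n-a|+r\bigr).
\]

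It remains only to evaluate this minimum. If $n\leq m$, the admissible value $a=n$ realizes $|n-a|=0$; if $n>m$, the admissible value closest to $n$ is $a=m$, realizing $|n-a|=n-m$. In either case $\min_{1\leq a\leq m}|n-a|=\max\{n-m,0\}$, and therefore $\operatorname{codim} V_r^c\geq\max\{n-m,0\}+r$, which is the claimed bound. I expect the only delicate point to be the codimension-preservation identity for $\pi_S$: one must check that the pullback of a semialgebraic set along the surjective linear projection raises its dimension by exactly the constant fiber dimension, so that codimension is preserved; once this is in hand, the remainder is bookkeeping over the finitely many subsets $S$.
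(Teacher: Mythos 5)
Your proof is correct and follows essentially the same route as the paper: decompose $V_r^c$ as the finite union over nonempty $S\subseteq(1,\dots,m)$ of the sets of jets whose restriction $z_S$ fails to be $v$-sufficient, apply Proposition \ref{th:vsuff_semialg} to each piece, get semialgebraicity from Boolean closure (Theorem \ref{th:semialg}), and bound the codimension of the union by that of the pieces. If anything, your write-up is more careful than the paper's: you make the projections $\pi_S$ and the preimage-stability of semialgebraic sets explicit (which is needed, since Proposition \ref{th:vsuff_semialg} lives in $J^r(n,a)$ rather than $J^r(n,m)$), and you correctly take the \emph{minimum} of the codimensions over the finitely many pieces and evaluate it, whereas the paper's displayed dimension count sums dimensions and codimensions over $S$ in a way that is garbled as written, though it arrives at the same final bound $\max\{n-m,0\}+r$.
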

\begin{proof}
Let 
\[
B_S= \bigl \{ z \in J^r(n,m) \, \big | \, z_s\text{ is not $v$-sufficient} \,\bigr\}, \quad S \subseteq (1,\ldots,m).
\]
By Proposition \ref{th:vsuff_semialg} $B_S$ it's a semialgebraic set, moreover we have
\[
V_r= \bigcap_{S\subseteq (1,\ldots,m)} B^{c}_S= \Bigl( \bigcup_{S\subseteq (1,\ldots,m)} B_S\Bigr)^c.
\]
Hence $V_r$ is a semialgebraic set from Theorem \ref{th:semialg}. \\
Finally note that
\[
dim \, V_r^c \leq \sum_{S}{dim \, B_S} \leq dim \, J^r(n,m) -\sum_{S}{|n-a|+r} ,
\]
where $S=(s_1,\ldots,s_a) \subseteq (1,\cdots,m)$. Hence
\[
codim \, V_r^c \geq \sum_{S\subseteq(1,\ldots,m)}{|n-a|+r} \geq max\{n-m, 0\} +r,
\]
in fact
\[
\begin{split}
						&n\geq m \Rightarrow n \geq a \Rightarrow |n-a|=n-a \geq n-m. \\
						&n \leq m \Rightarrow |n-a| \geq 0.
\end{split}
\]
\phantom{\qed}
\end{proof}

\begin{lemma}\label{lm:vr_inv}
$V_r$ is a $r$th-order algebraic condition.
\end{lemma}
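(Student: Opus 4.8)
The definition of an $r$th-order algebraic condition has two parts: semialgebraicity and invariance under the group $\mathcal{G}$ of $C^r$ diffeomorphism germs fixing $0 \in \R^n$. The first part is exactly Lemma \ref{lm:vr_semialg}, so the plan is to establish only the second. Recall that $\mathcal{G}$ acts on $J^r(n,m)$ by source reparametrization: for $\phi \in \mathcal{G}$ and $z = j^r f(0)$ we set $\phi \cdot z = j^r(f \circ \phi)(0)$, which is well defined on $r$-jets by the chain rule, since the $r$-jet of $f \circ \phi$ at $0$ depends only on the $r$-jets of $f$ and of $\phi$. This is a group action, and in particular $\phi^{-1} \cdot (\phi \cdot z) = z$.

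The first step is to check that \emph{$v$-sufficiency is itself preserved by this action}, for an arbitrary target dimension. I would fix a $v$-sufficient jet $z$ and let $F, G$ be two $C^r$ realizations of $\phi \cdot z$. Then $F \circ \phi^{-1}$ and $G \circ \phi^{-1}$ are realizations of $z$, because their $r$-jets at $0$ equal $\phi^{-1} \cdot (\phi \cdot z) = z$; moreover their zero sets are $\phi(F^{-1}(0))$ and $\phi(G^{-1}(0))$. Since $z$ is $v$-sufficient, $\phi(F^{-1}(0))$ and $\phi(G^{-1}(0))$ are homeomorphic near $0$. As $\phi$ is a homeomorphism of a neighbourhood of $0$, composing with $\phi$ and $\phi^{-1}$ transports this into a homeomorphism between $F^{-1}(0)$ and $G^{-1}(0)$ near $0$. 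Hence $\phi \cdot z$ is $v$-sufficient.

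The second step is the compatibility of the action with the restriction $z \mapsto z_S$. For any non-empty $S = (s_1, \ldots, s_a) \subseteq (1, \ldots, m)$, selecting the target components indexed by $S$ commutes with source reparametrization, since $(f \circ \phi)_S = f_S \circ \phi$; taking $r$-jets gives $(\phi \cdot z)_S = \phi \cdot (z_S)$. Combining the two steps: if $z \in V_r$ then each $z_S$ is $v$-sufficient, so by the first step $(\phi \cdot z)_S = \phi \cdot (z_S)$ is $v$-sufficient for every $S$, that is, $\phi \cdot z \in V_r$. Thus $V_r$ is $\mathcal{G}$-invariant, and together with Lemma \ref{lm:vr_semialg} this shows $V_r$ is an $r$th-order algebraic condition.

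I expect the only delicate point to be the bookkeeping of the localizations implicit in ``homeomorphic near $0$''. The cleanest way to make the transport-by-$\phi$ argument rigorous is to work throughout with germs at $0$, so that $\phi$, $\phi^{-1}$, and the homeomorphism supplied by the $v$-sufficiency of $z$ can be composed without shrinking neighbourhoods in an uncontrolled way.
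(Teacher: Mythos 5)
Your proof is correct and follows essentially the same route as the paper's: diffeomorphism-invariance of $v$-sufficiency, proved by transporting realizations and their zero sets through $\phi$, combined with Lemma \ref{lm:vr_semialg} for semialgebraicity. In fact your write-up is slightly more careful than the paper's, since you start from arbitrary realizations of $\phi \cdot z$ and pull them back (rather than only pushing forward realizations of $z$), and you explicitly check that the component selection $z \mapsto z_S$ commutes with the action --- two points the paper's proof leaves implicit.
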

\begin{proof}
Consider in $\R^n$ a local diffeomorphism near $0$ $\phi:U \rightarrow V$, with $\phi(0)=0$. Let $z \in J^r(n,m)$ be a v-sufficient jet and let $f,g$ be two realizations of $z$.
Then $f\circ\phi$ and $g\circ\phi$ are two realization of the same $r$-jet $z* \in J^r(n,m)$ and $\phi^{-1}\circ f^{-1}(0)$ and $\phi^{-1}\circ f^{-1}(0)$ are homeomorphic.
Hence
\[
V_r=\bigl\{z \in J^r(n,m)\,\big | \, z_s \, \text{is v-sufficient, } \forall S \subseteq (1,\ldots,m) \bigr \}.
\]
is invariant under the group of local diffeomoprhisms near $0$.\\
Using also Lemma \ref{lm:vr_semialg} we conclude that $V_r$ is a $r$th-order algebraic condition. \phantom{\qed}
\end{proof}

\begin{proposition} \label{th:pareto_stab} 
Suppose $f_0,f_1\,:\,U\subset\R^n \rightarrow \R^m$ are two realizations of the same $r$-jet in $V_r \subset J^r(n,m)$. If $f_0$ has a local Pareto optimum at the origin then also $f_1$ has a local Pareto optimum at the origin.
\end{proposition}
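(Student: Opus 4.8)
The plan is to reduce local Pareto optimality to a topological property of the germ of $f^{-1}(\overline{Pos})$ at the origin, where $\overline{Pos}$ is the closed positive orthant, and to show this property is constant along a deformation of $f_0$ into $f_1$ that stays inside the jet class. Since $f_0(0)=f_1(0)=0$, the origin is a local Pareto optimum of a realization $f$ precisely when no point of a punctured neighbourhood is carried by $f$ into $\overline{Pos}\setminus\{0\}$, i.e. when the origin is not an accumulation point of $f^{-1}(\overline{Pos}\setminus\{0\})$. I would connect the two realizations by the affine homotopy $f_t=(1-t)f_0+t f_1$, $t\in[0,1]$. As the $r$-jet is linear in the map and $j^r f_0=j^r f_1=z$, each $f_t$ is again a realization of $z$, whence $j^r f_t=z\in V_r$ for all $t$; it therefore suffices to show that the accumulation behaviour above is independent of $t$.

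The geometric content of $z\in V_r$ is a transversality statement. For every nonempty $S\subseteq\{1,\dots,m\}$ the subjet $z_S$ is v-sufficient, so by the third condition of Theorem \ref{th:kuo} the gradients $\{\nabla(f_t)_i\}_{i\in S}$ are linearly independent on $(f_t)_S^{-1}(0)\setminus\{0\}$ near the origin. Independence of exactly these gradients is transversality of $f_t$ to the coordinate subspace $\{y_i=0:i\in S\}$; letting $S$ vary, each $f_t$ is transverse on a punctured neighbourhood to every face $\sigma_T=\{y_i>0\,(i\in T),\,y_i=0\,(i\notin T)\}$ of the canonical Whitney stratification of $\overline{Pos}$. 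Proposition \ref{prop:back_stratification} then makes the pullback of this stratification a Whitney stratification of $f_t^{-1}(\overline{Pos})$ away from the origin.

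I would then apply the Thom--Mather first isotopy lemma to the family $\mathcal{F}(x,t)=f_t(x)$ regarded over the parameter $t$: the induced stratification of $\mathcal{F}^{-1}(\overline{Pos})$, with the origin-axis $\{0\}\times[0,1]$ adjoined as a one-dimensional stratum mapping diffeomorphically onto $[0,1]$, produces a fibre-preserving topological trivialization that fixes the origin and carries positive-orthant preimage to positive-orthant preimage. Hence the germ at the origin of $f_t^{-1}(\overline{Pos}\setminus\{0\})$, and in particular whether the origin accumulates it, does not depend on $t$. Comparing $t=0$ and $t=1$ yields the statement, indeed its two-sided strengthening.

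The main obstacle is that every regularity furnished by $V_r$ is valid only on a punctured neighbourhood, the origin being the sole singular point; to make the isotopy lemma applicable up to and fixing the origin one must use the quantitative horn-neighbourhood estimate of Theorem \ref{th:kuo} (its second condition) to ensure that the control on the gradients does not collapse as one approaches the origin. Establishing this control uniformly along the family $f_t$, and verifying that the trivialization genuinely respects the face decomposition of $\overline{Pos}$, is the technical crux on which the proof turns.
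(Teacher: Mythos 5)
Your reduction of local Pareto optimality to the statement that the origin does not accumulate $f^{-1}(\overline{Pos}\setminus\{0\})$ is correct, the affine homotopy $f_t=(1-t)f_0+tf_1$ does stay inside the jet class by linearity of $j^r$, and your use of the full hypothesis $z\in V_r$ (rather than mere v-sufficiency of $z$) to get, via the third condition of Theorem~\ref{th:kuo} applied to every subjet $z_S$, transversality of each $f_t$ to every face of the orthant on a punctured neighbourhood, is exactly the right instinct --- it is what makes the paper's counterexample $(-x^2,y)$ inapplicable to your argument. The gap is the final step. The Thom--Mather first isotopy lemma applied to $\pi:\mathcal{F}^{-1}(\overline{Pos})\to[0,1]$ requires (i) that the stratification be a Whitney stratification of the \emph{whole} set, including the adjoined stratum $\{0\}\times[0,1]$, i.e.\ that every face-preimage stratum satisfy condition $b$ over the axis; (ii) that $\pi|_{\mathcal{F}^{-1}(\overline{Pos})}$ be proper, which forces you to cut with a tube $\bar B_\delta\times[0,1]$ and then to verify transversality of all strata to the boundary sphere, uniformly in $t$; and (iii) that $\pi$ be a submersion on each stratum (this one you do get, from fibrewise transversality). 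Item (i) is established nowhere in your argument and does not follow from $z\in V_r$: v-sufficiency controls only the topological type of the zero fibres, through Kuo's horn-neighbourhood gradient estimate, and such topological control is a priori strictly weaker than Whitney regularity of those fibres over the singular point (topologically trivial families violating Whitney's conditions are a classical phenomenon, e.g.\ of Brian\c{c}on--Speder type). So the one statement your proof hinges on --- Whitney $b$ over the origin axis --- is precisely the kind of statement the hypothesis is not known to give.

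You flag this yourself as ``the technical crux'', but a proof whose crux is unresolved is not yet a proof: all the difficulty of Proposition~\ref{th:pareto_stab} lives at the origin, and everything you establish rigorously (pullback stratification, transversality, submersivity of $\pi$) holds only away from it. For comparison, the paper itself does not prove the proposition but defers to Wan \cite{Wan:1977yi}, and the arguments there (like Kuo's proof of Theorem~\ref{th:kuo} itself \cite{Kuo:1972fk}) work directly with the quantitative estimate on the horn neighbourhood $\{|z(x)|\leq\epsilon|x|^r\}$: outside the horn the sign pattern of any realization agrees with that of the polynomial $z$, while inside the horn the lower bound on $d(\nabla z_1,\dots,\nabla z_m)$ is used to build explicit (topological, not Whitney-stratified) control of the sets involved. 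If you want to salvage your homotopy strategy, that is the route: replace the isotopy-lemma step by a controlled vector field constructed from the Kuo estimate, made uniform in $t\in[0,1]$ (which is available, since $f_t-z=(1-t)(f_0-z)+t(f_1-z)$ has $r$-jet zero uniformly in $t$), instead of trying to promote v-sufficiency to Whitney regularity at the origin.
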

For a proof of this Proposition see Wan \cite{Wan:1977yi}. Here we just want to outline that if one merely assume that $z\in J^r(n,m)$ is v-sufficient then the conclusion of Proposition \ref{th:pareto_stab} is not necessarily true. 

\begin{remark}
Consider
\[
\begin{split}
&z(x,y)=(-x^2,y) \in J^2(2,2) \\
&f_0(x,y)=(-x^2 - y^4,y) \\
&f_1(x,y)=(-x^2+y^4,y).
\end{split}
\]
Clearly $z=j^2f_0=j^2f_1$ is v-sufficient in $C^2$, since for any realization $f,g$ of $z$ we have that $f^{-1}(0,0)$ and $g^{-1}(0,0)$ are homeomorphic to $(0,0)$. \\
We note that in this case $(0,0)$ is a strict local Pareto optimum for $f_0$ but is not local Pareto optimum for $f_1$. \\
The problem here is that $z$ is not in $V_2$. Let $z_1(x,y)=-x^2$ be the first component of $z$ and let $g(x,y)=-x^2-y^4$, $h(x,y)=-x^2+y^4$ be two realization of $z_1$. \\
Then $g^{-1}(0)=(0,0)$ is not homeomorphic to $h^{-1}(0)=\Bigl \{(x,y)\in \R^2 \taleche y^2=x^2 \Bigr \}$, hence $z_1$ is not v-sufficient.
\end{remark}
Let $V_r(W,\R^m)$ be the semialgebraic subbundle of $J^r(W,\R^m)$ with fiber $V_r$. For convenience, we reformulate a proof in this setting of a result in \cite{Wan:1977yi}.
\begin{proposition}
Let $\mu > min(n,m)+1$ and set $q=min(n,m)+1$. Condition $V_q$ is generic in $C^{\mu}(W,\R^m)$.
\end{proposition}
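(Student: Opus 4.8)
The plan is to realize the conclusion as an instance of the Thom jet transversality theorem, exploiting the codimension estimate of Lemma \ref{lm:vr_semialg}. First I would pass to the complementary fiber $V_q^c \subset J^q(n,m)$. By Lemma \ref{lm:vr_semialg} this is a semialgebraic set, and by Lemma \ref{lm:vr_inv} it is a $q$th-order algebraic condition, so it defines a semialgebraic subbundle $V_q^c(W,\R^m) \subset J^q(W,\R^m)$. As recalled in the subsection on stratifications of semialgebraic sets, $V_q^c$ admits a finite Whitney stratification, and this induces a Whitney stratification $\mathcal{S}$ of the subbundle $V_q^c(W,\R^m)$. A map $f$ satisfies condition $V_q$ precisely when $j^q f(W)$ avoids $V_q^c(W,\R^m)$ (the fibers $V_q$ and $V_q^c$ partition $J^q(n,m)$), so it suffices to show that the set of $f \in C^\mu(W,\R^m)$ with $j^q f(W) \cap V_q^c(W,\R^m) = \emptyset$ is residual.

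The crux is the dimension count. Setting $q = \min(n,m)+1$, the codimension bound of Lemma \ref{lm:vr_semialg} gives
\[
\operatorname{codim} V_q^c \ge \max\{n-m,0\} + q = \max\{n-m,0\} + \min\{n,m\} + 1 = n+1,
\]
where the last equality uses the elementary identity $\max\{n-m,0\}+\min\{n,m\}=n$ (check the two cases $n\ge m$ and $n<m$ separately). Since the codimension of a semialgebraic set equals the smallest codimension occurring among its strata, every stratum of $\mathcal{S}$ has codimension at least $n+1$ in $J^q(W,\R^m)$, which strictly exceeds $\dim W = n$.

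Now I would invoke the Thom transversality theorem (see \cite{Golubitsky:1973uj}): for each stratum $\sigma$ of $\mathcal{S}$ the set $\{f : j^q f \transv \sigma\}$ is residual in $C^\mu(W,\R^m)$, and since the stratification is finite the intersection over all strata is still residual, yielding a residual set of maps with $j^q f \transv \mathcal{S}$. The point is that transversality of the section $j^q f$ to a stratum of codimension greater than $\dim W$ can only hold vacuously: at a hypothetical intersection point the image $d(j^q f)(T_p W)$ has dimension at most $n$, too small to complement a tangent space of codimension $\ge n+1$, so transversality forces $j^q f(W) \cap \sigma = \emptyset$. Hence every map in this residual set avoids all strata, i.e. $j^q f(W) \subset V_q(W,\R^m)$, which is exactly condition $V_q$; this proves $V_q$ generic.

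The main obstacle I anticipate is not the transversality machinery, which is standard once $V_q^c(W,\R^m)$ is presented as a finite union of submanifolds, but rather the careful bookkeeping needed to pass from the fiberwise codimension bound of Lemma \ref{lm:vr_semialg} to a genuine codimension statement for the strata of the subbundle $V_q^c(W,\R^m)$, and to verify that the Whitney stratification of the semialgebraic fiber really does induce one on the subbundle, compatibly with the local trivializations of $J^q(W,\R^m)$. One must also ensure the transversality theorem is used in the form valid for Whitney stratified target sets, so that the finitely many residual conditions assemble correctly, and that the hypothesis $\mu > q$ guarantees $j^q f$ is well defined and the genericity statement holds in the $C^\mu$ topology.
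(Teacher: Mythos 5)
Your proposal is correct and takes essentially the same route as the paper: both characterize condition $V_q$ as $j^qf(W)\cap V_q^c(W,\R^m)=\emptyset$, use the codimension bound of Lemma \ref{lm:vr_semialg} (with $\max\{n-m,0\}+\min\{n,m\}+1=n+1>\dim W$) to identify this avoidance with transversality of $j^qf$ to $V_q^c(W,\R^m)$, and conclude residuality from Thom's transversality theorem. Your version merely spells out two details the paper leaves implicit, namely the Whitney stratification needed to make sense of transversality to the non-smooth semialgebraic subbundle and the dimension count showing that such transversality can only hold vacuously.
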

\begin{proof}
Set 
\[
\begin{split}
M &=\bigl \{ f \in C^{\mu}(W,\R^m) \, \big | \, f \, \text{satisfies} \, V_q \bigr \} \\
&=\bigl \{f\in C^{\mu}(W,\R^m) \, \big| \, j^qf(W) \subset V_q(W,\R^m) \bigr \}\\
&=\bigl \{f\in C^{\mu}(W,\R^m) \, \big| \, j^qf(W) \cap V_q^c(W,\R^m)= \emptyset \bigr \}.
\end{split}
\]
By Lemma \ref{lm:vr_semialg} $codim\,V_q^c>n$ hence we have that
\[
M = \bigl \{f\in C^{\mu}(W,\R^m) \, \big| \, j^qf \, \tpitchfork \, V_q^c(W,\R^m) \bigr \}.
\]
Thus, applying Thom's Transversality, we can conclude that $W$ is a residual subset of $C^{\mu}(W,\R^m)$. \phantom{\qed}
\end{proof}

Let 
\[
\begin{split}
A=\bigl \{ & j^q f \taleche f:\R^n \To \R^m \text{ is a polynomial of deg $\leq q$}  \\ 
& \text{and has a local Pareto optimum at } 0 \bigr \}.
\end{split}
\]

Polynomial mappings from $\R^n$ to $\R^m$ , which have degree less than or equal to $q$ and vanish at zero, form a vector space $B^q_{n,m}$ and we can take as coordinates the coefficients of the polynomials so that $B^q_{n,m}$ becomes isomorphic to some $\R^l\cong\{(a_\iota) \, | \, a_\iota \in \R^m, \, 0<|\iota|\leq q \}$. \\
Given $(a_\iota)\in \R^l$, we set $f(a,x)= \sum_i{a_\iota x^\iota}$. Thus $f(a,\cdot): \R^n \rightarrow \R^m$ is the polynomial mapping with coefficients $(a_\iota)$.
Let $f(a,x)=(f_1(a,x), \ldots , f_m(a,x))$.
\begin{lemma}
The set
\[
\begin{split}
B=\bigl \{& a \in \R^l \big|f_1(a,y) \geq 0, \ldots , f_m(a,y) \geq 0 \\
& \Rightarrow \, f_1(a,y)=0,..., f_m(a,y)=0, \, \forall y\in \R^n, \,|y|^2<1 \bigr \}
\end{split}
\]
is a semialgebraic subset in $\R^l$.
\end{lemma}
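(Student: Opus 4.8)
The plan is to express the complement $B^c$ as the image of a semialgebraic set under a linear projection, so that the Tarski--Seidenberg theorem (Theorem \ref{Tar-Sei}) applies directly, and then to recover $B$ itself by complementation via Theorem \ref{th:semialg}. The reason for working with the complement is that the condition defining $B$ carries a universal quantifier over $y$, whereas the form of Tarski--Seidenberg available to us produces images (and hence handles existential quantifiers); negating the implication turns the ``$\forall y$'' into an ``$\exists y$''.

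First I would rewrite membership in $B$. Because each polynomial satisfies $f_i(a,0)=0$, a point $y$ with $|y|^2<1$ violates the defining implication exactly when $f_i(a,y)\geq 0$ for every $i$ while $f_j(a,y)>0$ for at least one $j$, i.e. when $y$ dominates the origin. Negating the implication therefore gives
\[
B^c=\bigl\{a\in\R^l \bigm| \exists\, y\in\R^n,\ |y|^2<1,\ \textstyle\bigwedge_{i=1}^m f_i(a,y)\geq 0,\ \textstyle\bigvee_{j=1}^m f_j(a,y)>0\bigr\}.
\]

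Next I would use that $f(a,x)=\sum_\iota a_\iota x^\iota$ depends polynomially on the pair $(a,x)$ (indeed linearly in the coefficients $a_\iota$), so that each $f_i(a,y)$ is a single polynomial in the joint variables $(a,y)\in\R^{l+n}$. Consequently the set
\[
E=\bigcup_{j=1}^m\bigl\{(a,y)\in\R^{l+n} \bigm| 1-|y|^2>0,\ f_1(a,y)\geq 0,\dots,f_m(a,y)\geq 0,\ f_j(a,y)>0\bigr\}
\]
is a finite union of sets cut out by finitely many polynomial equalities and inequalities, hence semialgebraic (each weak inequality $f_i\geq 0$ may be written as $\{-f_i>0\}^c$, and Theorem \ref{th:semialg} guarantees closure under the operations involved). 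Applying the linear, hence polynomial, projection $\pi\colon\R^{l+n}\to\R^l$, $\pi(a,y)=a$, one has $B^c=\pi(E)$, which is semialgebraic by Theorem \ref{Tar-Sei}. Finally, since the semialgebraic sets are closed under complementation (Theorem \ref{th:semialg}), $B=(B^c)^c$ is semialgebraic, as claimed.

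The only genuinely delicate points are bookkeeping ones: correctly negating the implication so that the surviving quantifier matches the projection $\pi$, and noticing that joint polynomiality of $f_i$ in $(a,y)$ is precisely what places $E$ in the semialgebraic category. Once these are settled, the statement reduces to a routine application of the closure properties and the projection theorem already recorded in the excerpt.
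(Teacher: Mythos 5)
Your proof is correct and takes essentially the same approach as the paper: negate the universally quantified implication, realize $B^c$ as the image under the polynomial projection $\R^l\times\R^n\to\R^l$ of a semialgebraic set, apply Tarski--Seidenberg (Theorem \ref{Tar-Sei}), and recover $B$ by complementation via Theorem \ref{th:semialg}. The only difference is cosmetic, and in fact in your favor: you encode ``not all $f_i(a,y)$ vanish'' as the finite union over $j$ of $\{f_j(a,y)>0\}$, whereas the paper uses a single condition on the sum $f_1(a,y)+\cdots+f_m(a,y)$, whose printed form ``$=0$'' is evidently a typo for ``$>0$'' (as literally written the projected set would be all of $\R^l$, since $y=0$ satisfies it for every $a$); given $f_i(a,y)\geq 0$ for all $i$, your formulation and the corrected sum condition define the same set.
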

\begin{proof}
By Theorem \ref{th:semialg} it suffices to prove that $B^c$ is semialgebraic. Let 
\[
\pi_1: \, \R^l \times \R^n \rightarrow \R^l, 
\]
and set
\[
\begin{split}
\overline{Y}=\bigl \{&(a,y) \in \R^l \times \R^n \, \big| \, |y|^2<1, \, f_1(a,y) \geq 0, \ldots , f_m(a,y) \geq 0, \\
										 &f_1(a,y)+ \ldots +f_m(a,y)=0 \bigr \}.
\end{split}
\]
Cleary $\overline{Y}$ is semialgebraic, $\pi_1(\overline{Y}) = B^c$ and $\pi_1$ is a polynomial mapping. Using Taski-Seidenberg we conclude that $B^c$(and hence $B$) is semialgebraic. \phantom{\qed}
\end{proof}
\begin{theorem}\label{th:pol_semialg}
Let 
\[
E= \bigl \{ a\in\R^l \, \big| \, f(a, \cdot) \, \text{has a local Pareto optimum at }0 \bigr \}.  
\]
Then E is a semialgebraic subset in $\R^l$.
\end{theorem}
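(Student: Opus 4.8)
The plan is to express membership in $E$ as a first-order condition over the reals, built entirely from polynomial (in)equalities, and then to eliminate the quantifiers by repeated use of the Tarski--Seidenberg theorem (Theorem \ref{Tar-Sei}) together with closure of semialgebraic sets under finite Boolean operations (Theorem \ref{th:semialg}). The only genuinely new feature compared with the preceding lemma on $B$ is that ``local Pareto optimum'' requires the \emph{existence} of some neighbourhood, so an extra existential quantifier over the radius must be introduced and then eliminated.

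First I would record the pointwise meaning of local optimality. Since $f(a,0)=0$, the origin is a local Pareto optimum of $f(a,\cdot)$ exactly when there is a radius $\epsilon>0$ such that no $y$ with $|y|<\epsilon$ dominates $0$; unwinding the definition of domination, this reads
\[
a\in E \iff \exists\,\epsilon>0\;\;\forall y\in\R^n:\Bigl(|y|^2<\epsilon^2\wedge\bigwedge_{i}f_i(a,y)\geq 0\Bigr)\Rightarrow\bigwedge_i f_i(a,y)=0.
\]
Exactly as in the proof for $B$, I would encode the consequent ``$\bigwedge_i f_i=0$'' (under the hypothesis $\bigwedge_i f_i\geq 0$) by the single polynomial equation $\sum_i f_i(a,y)=0$, and its negation by $\sum_i f_i(a,y)>0$; this is legitimate because it is always combined with $f_i(a,y)\geq 0$ for every $i$.

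Next I would peel off the inner universal quantifier by complementation. Define the semialgebraic set of triples that violate the implication,
\[
\overline{U}=\Bigl\{(a,\epsilon,y)\in\R^l\times\R\times\R^n\;\big|\;\epsilon>0,\ |y|^2<\epsilon^2,\ f_1(a,y)\geq0,\dots,f_m(a,y)\geq0,\ \textstyle\sum_i f_i(a,y)>0\Bigr\},
\]
which is cut out by finitely many polynomial inequalities. Let $\pi_{12}:\R^l\times\R\times\R^n\to\R^l\times\R$ and $\pi_1:\R^l\times\R\to\R^l$ be the coordinate projections. Applying Tarski--Seidenberg to $\pi_{12}$, the image $\pi_{12}(\overline{U})$ is semialgebraic, hence by Theorem \ref{th:semialg} so is
\[
T=\{(a,\epsilon):\epsilon>0\}\cap\bigl(\pi_{12}(\overline{U})\bigr)^c,
\]
which is precisely the set of pairs $(a,\epsilon)$ with $\epsilon>0$ for which no violating $y$ exists, i.e.\ for which the implication holds for all $y$ with $|y|<\epsilon$. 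Since $a\in E$ iff some such $\epsilon$ exists, $E=\pi_1(T)$, and one more application of Tarski--Seidenberg yields that $E$ is semialgebraic.

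The argument is therefore mostly bookkeeping on top of the two quantifier-elimination statements already in hand, and I do not anticipate a deep obstacle. The point that requires care is the quantifier alternation $\exists\epsilon\,\forall y$, which forces the two-step pattern project--complement--project rather than a single projection; one must keep the complement inside the slab $\{\epsilon>0\}$ so that the final projection onto $a$ genuinely records existence of a positive radius. I would also double-check that allowing $\epsilon$ to range as a free real variable, rather than fixing the unit ball as in the definition of $B$, is exactly what upgrades ``Pareto optimal on a fixed ball'' to the genuinely local notion ``local Pareto optimum'' (so that $B\subseteq E$ but the converse may fail).
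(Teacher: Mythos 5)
Your proof is correct, and it reaches the result by a genuinely different device than the paper. Both arguments encode non-domination the same way (the implication $f_1(a,y)\geq 0,\dots,f_m(a,y)\geq 0 \Rightarrow \sum_i f_i(a,y)=0$) and both rest on Tarski--Seidenberg, but they eliminate the quantifier ``there exists a radius'' differently. The paper reuses its lemma on $B$ (optimality tested on the \emph{fixed} unit ball) verbatim and absorbs the radius into a polynomial rescaling of the coefficients: the map $\rho(\epsilon,a)=(\epsilon^{k-|\iota|}a_\iota)_\iota$ satisfies $f(\rho(\epsilon,a),y)=\epsilon^k f(a,y/\epsilon)$, so that $E=\rho(\widetilde E)$ with $\widetilde E=\{(\epsilon,a)\,:\,\epsilon>0,\ a\in B\}$, and a single further application of Tarski--Seidenberg finishes. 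You instead keep the coefficients fixed, carry the radius $\epsilon$ as an explicit free variable, redo the inner universal-quantifier elimination in parametrized form (your $\overline{U}$ is the $\epsilon$-fattened version of the paper's $\overline{Y}$), and then project out $\epsilon$; this is the standard project--complement--project pattern, costing two applications of Tarski--Seidenberg and never invoking the lemma on $B$ as an ingredient. Your route is more mechanical and self-contained --- it needs no insight beyond ``local Pareto optimality is a first-order condition'' and sidesteps the rescaling map entirely (whose exponent $k$ the paper leaves undefined, presumably $k=q$) --- and it generalizes verbatim to any locally-defined property expressible by polynomial formulas; the paper's route is shorter once $B$ is in hand and makes the scale-invariance structure of the problem explicit. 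Your two closing cautions --- that the complement must be intersected with the slab $\{\epsilon>0\}$ before the final projection (otherwise every $a$ would appear in the image via pairs with $\epsilon\leq 0$), and that $B\subseteq E$ with the converse possibly failing --- are both correct and are precisely the points where a careless version of this argument would break.
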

\begin{proof}
Consider the polynomial mapping
\[
\rho: \R \times \R^l \rightarrow \R^l, \quad \rho(\epsilon, a)=(\epsilon^{k-|\iota|}a_\iota)_\iota
\]
Set
\[
\widetilde{E}=\bigl \{ (\epsilon,a) \in \R \times \R^l \, \big| \, \epsilon>0, \, a \in B \bigr \}.
\]

Clearly $\widetilde{E}$ is semialgebraic (since $B$ is semialgebraic). Moreover $\rho(\widetilde{E})=E$. \phantom{\qed}
\end{proof}
$A$ and $E$ are naturally isomorphic by means of the choice of the coefficients of the polynomials as coordinates, hence we have that $A$ is a semialgebraic subset of $J^q(n,m)$. Moreover, $A$ is clearly invariant under the group of local diffeomorphism of $\R^n$ around $0$. 
Define $C=V_q \cap A$. Then $C$ is a $q$-th algebraic condition and we can define $C(W,\R^m)$, the semialgebraic subbundle of $J^q(W,\R^m)$ with fiber $C$. 

With the following result Y.H.Wan \cite{Wan:1977yi} proves that for a residual subset of $C^\mu(W,\R^m)$ we have a necessary and sufficient condition for Pareto local optimality.
\begin{proposition}
Let $\mu > q$. Then a $C^{\mu}$ map $f: W \To \R^m$ satisfying condition $V_q$ has a local Pareto optimum at a point $x\in W$ if and only if $f$ satisfies condition $C$ at $x$, i.e., $j^qf(x) \in C(W,\R^m)$.
\end{proposition}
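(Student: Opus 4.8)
The plan is to reduce the whole statement to a property of the single jet $j^qf(x)$ and then read off the equivalence from the stability of Pareto optimality under condition $V_q$, i.e.\ from Proposition~\ref{th:pareto_stab}. First I would fix $x\in W$ and pass to a local chart carrying $x$ to $0\in\R^n$, together with the target translation carrying $f(x)$ to $0\in\R^m$. Replacing $f$ by $f-f(x)$ leaves the inequalities $f_i(x)\geq f_i(y)$ (strict for some $j$) unchanged, so it preserves the dominance relation and hence local Pareto optimality; likewise reparametrizing the source by a diffeomorphism fixing $0$ only permutes neighbourhoods and so preserves optimality. In these coordinates $f$ becomes a local map $\tilde f:U\subset\R^n\To\R^m$ with $\tilde f(0)=0$, and by Corollary~\ref{cor:taylor} its $q$-jet is represented by its Taylor polynomial $P$ of degree $\leq q$, which under the identification $J^q(n,m)\cong B^q_{n,m}$ is precisely the polynomial realization of $j^qf(x)$. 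Because $V_q$ and $A$ are $q$th-order algebraic conditions (Lemma~\ref{lm:vr_inv}, together with the invariance of $A$ noted just above the statement), membership in these sets and hence in the fibre of $C(W,\R^m)$ does not depend on the chart, so this reduction is legitimate.

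Next I would record the two elementary consequences of the hypotheses. Since $f$ satisfies condition $V_q$, by definition $j^qf(W)\subset V_q(W,\R^m)$, so in particular $j^qf(x)$ already lies in the fibre $V_q$. As $C=V_q\cap A$, the condition $j^qf(x)\in C(W,\R^m)$ is therefore equivalent to the single requirement $j^qf(x)\in A$, i.e.\ that the Taylor polynomial $P$ have a local Pareto optimum at $0$. The proposition thus reduces to the equivalence: under $V_q$, the map $f$ has a local Pareto optimum at $x$ if and only if $P$ has a local Pareto optimum at $0$.

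This last equivalence I would obtain by applying Proposition~\ref{th:pareto_stab} in both directions to the pair $\tilde f$ and $P$, which are two realizations of the common $q$-jet $j^qf(x)\in V_q$; both are admissible realizations, since $\tilde f$ is $C^\mu$ with $\mu>q$ and $P$ is polynomial. If $P$ has a local Pareto optimum at $0$ then, as $j^qf(x)\in V_q$, the proposition transfers it to $\tilde f$, hence to $f$ at $x$. Conversely, if $f$ has a local Pareto optimum at $x$, then $\tilde f$ does at $0$, and applying the same proposition with the roles of the two realizations exchanged yields a local Pareto optimum for $P$ at $0$, i.e.\ $j^qf(x)\in A$. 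Combined with the previous paragraph this establishes both implications.

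I expect the only genuine subtlety — the main obstacle — to be the symmetric use of Proposition~\ref{th:pareto_stab}, which as stated transfers optimality from a distinguished realization to an arbitrary one; one must verify that its hypotheses are symmetric in the two realizations so that the reverse implication is equally valid. The remark preceding the statement, with the example $z=(-x^2,y)$, is precisely a caution that v-sufficiency of the jet alone does \emph{not} allow this transfer: it is membership in $V_q$, i.e.\ v-sufficiency of every subjet $z_S$, that underlies the argument. I would therefore make explicit that the standing hypothesis ``$f$ satisfies $V_q$'' supplies exactly the condition needed for the double application to go through.
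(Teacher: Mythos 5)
Your proof is correct and takes essentially the same route as the paper: since $f$ satisfies $V_q$, membership of $j^qf(x)$ in $C=V_q\cap A$ reduces to membership in $A$, and both implications follow by applying Proposition~\ref{th:pareto_stab} to the two realizations of $j^qf(x)$ given by $f$ in local coordinates and its degree-$q$ Taylor polynomial. Your write-up actually makes explicit what the paper leaves implicit (the chart-invariance of the conditions and the use of Proposition~\ref{th:pareto_stab} in the ``necessary'' direction), and the symmetry worry you raise is harmless, since that proposition already treats its two realizations interchangeably.
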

\begin{proof}
Recall that $q=min(n,m)+1$. \\
Let $C \subset J^q(n,m)$ be the $q$th-order algebraic condition defined above.
Set
\[
M=\bigl \{ f \in C^{\mu}(W,\R^m) \, \big | \, f \, \text{satisfies} \, V_q \bigr \}
\]
and let $f\in M$. \\
Assume f has a local Pareto optimum at $x \in W$. This implies $j^qf(x) \in C(W,\R^m)$.
On the other hand, assume $j^qf(x) \in C(W,\R^m)$. Then by Proposition \ref{th:pareto_stab} $f$ has a local Pareto optimum at x. \\
Hence we conclude that $C$ is a necessary and sufficient $q$th-order algebraic condition for $x$ to be a local Pareto optimum of $f$. \phantom{\qed}
\end{proof}
\subsection{Sufficient regularity for a function}

The $q$-th order condition $C \subset J^r(n,m)$ of the previous section is algebraic, hence by definition it is a semialgebraic subset of $J^r(n,m)$, thus there exists a Whitney stratification $\mathcal{S}_0$ of $C$. 
\begin{itemize}
\item $C(W,\R^m)$ is not a smooth subbundle, since $C$ is not a smooth submanifold, but it is a stratified set: $\mathcal{S}_0$ induces a Whitney stratification $\mathcal{S}$ on $C(W,\R^m)$. \\
\item $\mathrm{Cl}(C)$ is semialgebraic and admits a Whitney stratification $\overline{\mathcal{S}}_0$ s.t. $C$ is union of strata ($\mathcal{S}_0 \subset \overline{\mathcal{S}}_0$).\\
\item $\overline{C}(W,\R^m)$, the semialgebraic subbundle with fiber $\mathrm{Cl}(C)$, has a Whitney stratification $\overline{\mathcal{S}}$ s.t. $C(W,\R^m)$ is union of strata.
\end{itemize}
We define now an important class of functions.
\begin{definition}
Let $\mu > q +1$ ($q=min(n,m)+1$). Then a map $f:W \rightarrow \R^m$ of class $C^\mu$ is \emph{sufficiently regular} if
\begin{enumerate}
\item $f$ satisfies condition $V_q$.
\item $f \, \tpitchfork \, \sigma$, for all strata $\sigma \in \overline{\mathcal{S}}$
\end{enumerate} 
\end{definition}

\subsection{Main results}
\begin{theorem}
Let $\mu>min(n,m)+2$. Then the set of local Pareto optima for a \emph{sufficently regular} function $f:W \rightarrow \R^m$ admits a Whitney stratification.
\end{theorem}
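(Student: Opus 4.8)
The plan is to realise the local Pareto set as the preimage of a stratified set under the jet prolongation map, and then to pull back a Whitney stratification. First I would invoke the characterisation established just above: since a sufficiently regular $f$ satisfies condition $V_q$ (with $q=\min(n,m)+1$), a point $x\in W$ is a local Pareto optimum of $f$ if and only if $j^qf(x)\in C(W,\R^m)$. Consequently the local Pareto set is exactly
\[
\theta_{op}=(j^qf)^{-1}\bigl(C(W,\R^m)\bigr),
\]
and the problem is reduced to stratifying a preimage, which is precisely the situation handled by Proposition \ref{prop:back_stratification}.

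Next I would apply that pullback result not to $C(W,\R^m)$ itself, which need not be closed, but to the closed semialgebraic subbundle $\overline{C}(W,\R^m)$ endowed with the Whitney stratification $\overline{\mathcal{S}}$ constructed above. Taking $X=W$, $Y=J^q(W,\R^m)$ and the section $j^qf$ as the map, condition (2) of sufficient regularity supplies exactly the transversality $j^qf\transv\sigma$ for every $\sigma\in\overline{\mathcal{S}}$, while the hypothesis $\mu>\min(n,m)+2=q+1$ guarantees that $j^qf$ is differentiable enough (it is $C^{\mu-q}$ with $\mu-q>1$) for the degree condition of Proposition \ref{prop:back_stratification} to hold; the exact bookkeeping of orders of differentiability is the routine point to check here. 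The conclusion is that $(j^qf)^*\overline{\mathcal{S}}$, the family of connected components of the sets $(j^qf)^{-1}(\sigma)$, is a Whitney stratification of the closed set $Z:=(j^qf)^{-1}(\overline{C}(W,\R^m))$.

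It then remains to descend from a stratification of $Z$ to one of $\theta_{op}$. Here I would use the key structural fact that $C(W,\R^m)$ is a union of strata of $\overline{\mathcal{S}}$, i.e. $\mathcal{S}\subset\overline{\mathcal{S}}$. Because the strata of $\overline{\mathcal{S}}$ are pairwise disjoint, each $\sigma\in\overline{\mathcal{S}}$ is either contained in $C(W,\R^m)$ or disjoint from it; hence for the corresponding pullback stratum $\tau$ (a component of $(j^qf)^{-1}(\sigma)$) one has either $\tau\subset\theta_{op}$ or $\tau\cap\theta_{op}=\emptyset$. Restricting $(j^qf)^*\overline{\mathcal{S}}$ to the subfamily $\{\tau:\tau\subset\theta_{op}\}$ therefore covers $\theta_{op}$ by pairwise disjoint submanifolds lying in $\theta_{op}$, and both local finiteness and Whitney's condition $b$ are inherited verbatim from the ambient stratification.

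The step I expect to be the main obstacle is verifying the condition of the frontier for this restricted subfamily, since $\theta_{op}$ is in general only locally closed in $W$. For a stratum $\tau\subset\theta_{op}$, closedness of $Z$ gives $\partial\tau\subset Z$, and the condition of the frontier for $(j^qf)^*\overline{\mathcal{S}}$ exhibits $\partial\tau$ as a union of ambient strata $\tau'$, each a component of some $(j^qf)^{-1}(\sigma')$. By the dichotomy above, $\tau'\subset\theta_{op}$ exactly when $\sigma'\subset C(W,\R^m)$ and $\tau'\cap\theta_{op}=\emptyset$ otherwise, so intersecting with $\theta_{op}$ retains precisely the strata of the subfamily and discards the rest. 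Thus $\partial\tau\cap\theta_{op}$ is a union of strata of the restricted family, which therefore is a genuine Whitney stratification of $\theta_{op}$, completing the argument.
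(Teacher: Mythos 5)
Your proposal is correct and follows essentially the same route as the paper: identify $\theta_{op}$ as $(j^qf)^{-1}(C(W,\R^m))$ via the necessary-and-sufficient condition, pull back the Whitney stratification $\overline{\mathcal{S}}$ of $\overline{C}(W,\R^m)$ through the transversal map $j^qf$ using Proposition \ref{prop:back_stratification}, and then restrict to the strata lying in $\theta_{op}$. The only difference is one of care, not of method: you explicitly verify that the restricted subfamily inherits local finiteness, Whitney's condition $b$, and the condition of the frontier, a point the paper disposes of in one sentence by asserting that $\theta_{op}$ is a union of strata of $j^qf^*\overline{\mathcal{S}}$.
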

\begin{proof}
Let $f:W\To \R^m$ be a sufficently regular function. By the previous proposition we have that
\[
\theta_{op}=j^qf^{-1}(C(W,\R^m))
\]
Moreover $\overline{C}(W,\R^m)$ admits a Whitney stratification $\overline{\mathcal{S}}$ s.t. $C(W,\R^m)$ is a substratified set, with stratification $\mathcal{S}$. Then by Proposition \ref{prop:back_stratification} $j^qf^{-1}(\overline{C}(W,\R^m))$ admits a Whitney stratification given by
\[
j^qf^*\overline{\mathcal{S}}= \set{\text{conncected components of } j^qf^{-1}(\sigma)\taleche \sigma \in \overline{\mathcal{S}}}
\]
Therefore, $\theta_{op} \subset j^qf^{-1}(\overline{C}(W,\R^m))$ admits a Whitney stratification: is union of strata of $j^qf^*\overline{\mathcal{S}}$. \phantom{\qed}
\end{proof}

\begin{proposition}
Let $\mu>min(n,m)+2$. The set of sufficiently regular functions is \emph{residual} in $C^\mu(M,\R^m)$ i.e., is
a countable intersection of dense sets in the Whitney $C^\mu$ topology. Furthermore, 
being $C^\mu(W,\R^m)$ endowed with the Whitney $C^\mu$ topology a Baire space, any residual set is also dense.
\end{proposition}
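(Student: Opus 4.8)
The plan is to exhibit the set of sufficiently regular maps as a finite intersection of residual subsets of $C^\mu(W,\R^m)$ and then invoke the Baire property. Write $\mathrm{SuffReg}$ for the set in question. Since by definition a map $f$ is sufficiently regular precisely when the two requirements of the definition hold simultaneously, I would first record the decomposition
\[
\mathrm{SuffReg} = M \cap \bigcap_{\sigma \in \overline{\mathcal{S}}} T_\sigma,
\]
where $M=\set{f\in C^\mu(W,\R^m)\taleche f \text{ satisfies } V_q}$ and $T_\sigma=\set{f\in C^\mu(W,\R^m)\taleche j^q f \transv \sigma}$. Here I read the condition ``$f\transv\sigma$'' of the definition as the transversality of the jet extension $j^qf$ to the stratum $\sigma\subset J^q(W,\R^m)$, which is the only meaningful reading since each $\sigma$ lives in the jet bundle. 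The factor $M$ has already been shown to be residual in the proposition asserting the genericity of $V_q$ (which applies since $\mu>\min(n,m)+2>\min(n,m)+1$).

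The first structural point to establish is that the index family $\overline{\mathcal{S}}$ is finite. Indeed $\mathrm{Cl}(C)$ is a semialgebraic subset of the fibre $J^q(n,m)$, so it admits a finite Whitney stratification $\widehat{\mathcal{S}_0}=\set{\sigma_1,\dots,\sigma_N}$; because $C$ is an algebraic ($q$th-order) condition it is invariant under local coordinate changes, so this fibre stratification is transported fibrewise to the subbundle and $\overline{\mathcal{S}}$ consists exactly of the $N$ subbundle strata $\sigma_i(W,\R^m)$. Compactness of $W$ guarantees that no further, accumulating strata are produced, so the intersection above is genuinely finite.

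Next I would handle each transversality factor. Each $\sigma_i(W,\R^m)$ is a smooth submanifold of the jet bundle $J^q(W,\R^m)$, so Thom's jet transversality theorem \cite{Golubitsky:1973uj}, applicable because $\mu>q+1$, shows that $T_{\sigma_i}$ is residual, i.e. a countable intersection of open dense sets. A finite intersection of residual sets is again residual; combining the $T_{\sigma_i}$ with the residual set $M$ therefore yields that $\mathrm{SuffReg}$ is residual, which proves the first assertion. Finally, since the Whitney $C^\mu$ topology makes $C^\mu(W,\R^m)$ a Baire space \cite{Golubitsky:1973uj}, every residual set is dense, giving the second assertion.

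The one genuinely delicate point is the bookkeeping around Thom's theorem rather than any single hard estimate: one must verify that the $\sigma_i(W,\R^m)$ are honest submanifolds of the jet bundle and that jet transversality applies to each, including the strata of codimension exceeding $n$ (which are then generically missed, automatically from the statement of the theorem), and that the passage from the fibre stratification of $\mathrm{Cl}(C)$ to the subbundle stratification $\overline{\mathcal{S}}$ really produces only finitely many strata over the compact base $W$. Once these are checked, the residuality is an immediate consequence of the earlier genericity of $V_q$ together with finitely many applications of Thom transversality.
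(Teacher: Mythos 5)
Your proposal is correct and follows essentially the same route as the paper: the same decomposition into the set of maps satisfying condition $V_q$ (residual by the earlier genericity proposition) intersected with the transversality sets, with residuality of the latter coming from Thom's transversality theorem and the finiteness of the strata of $\overline{\mathcal{S}}$, and density then following from the Baire property. Your write-up merely makes explicit two points the paper leaves implicit --- that ``$f \transv \sigma$'' means transversality of the jet extension $j^q f$, and that finiteness of $\overline{\mathcal{S}}$ comes from the finite semialgebraic stratification of $\mathrm{Cl}(C)$ transported fibrewise --- which is a faithful elaboration, not a different argument.
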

\begin{proof}
Let $G=\set{f:W \To \R^m \taleche f\text{ is sufficently regular} }$. Then $G=H \cap L$, where:
\begin{itemize}
\item $H=\set{f:W \To \R^m \taleche f\text{ satisfies condition }V_q}$ is residual.
\item $L=\set{f:W \To \R^m \taleche f \tpitchfork \sigma, \, \forall \sigma \in \overline{\mathcal{S}}}$ is residual by Thom's Transversality, since $\overline{\mathcal{S}}$ has finitely many strata. \phantom{\qed}
\end{itemize}
\end{proof}

Therefore, for a dense class of smooth functions in any reasonable topology, the Pareto set is a Whitney stratified set. This justifies the fact that one can expect that typically the Pareto set is composed by branches of $m-1$ dimensional manifolds with boundaries and corners, and if this is not the case, an arbitrarily small perturbation of the objective functions
will bring back the situation to the expected case.

Moreover, the stratification of the Pareto set can be described by polynomial inequalities, derivable from necessary and sufficient algebraic conditions for Pareto optimality. Hence it is important to find a more workable expression of Kuo--conditions. In the Appendix we propose a reformulation to approach the polynomial conditions for v-sufficiency in a more practical way.

\section{Conclusions}

Driven by the interpretation of the Darwinian evolution as a the ``survival of the fittest'' where the fitness function is multi objective, we have analyzed the hierarchical decomposition of the Pareto set in geometrical objects of dimensions $0,1,\dots,m-1$, where $m$ is the number of objective functions. 

For real world applications, such decomposition can give useful suggestions on the definition of the unknown function optimized by observed realizations, in the biological example, the wild species. 

For optimization problems, such decomposition may help to design effective optimization strategies, by composing in a consistent way the Pareto optimal sets of the smaller dimension subproblems, which are easier to study than the full problem with all the functions involved. 

It is our conviction that exploring and highlighting such hierarchical structures gives precious  
insights in MO problems, from the numerical point of view but also from the point of view of the decision maker, which will have in this way a more clear and structured idea of the whole range of possible solutions.

\section*{Appendix}
Here we want to give a reformulation of Kuo conditions in Theorem \ref{th:kuo}, for details see \cite{Koz:2010prk}. We will reduce the verification of Kuo conditions to the problem of the rate of growth of a polynomial about one of its roots, which is equivalent to calculation of the so called \emph{local Lojasiewicz exponent} of a polynomial. \\
Recall that according to the Lojasiewicz theorem \cite{Loj:1959,Loj:1991,Malgrange:1967} for any polynomial $p:\R^n \rightarrow \R^m$ with $p(0)=0$ there exist constants $C,k >0$ s.t.
\[
|p(x)| \geq C |x|^k
\]
in a neighborhood of the zero root. The last $k$ for which the above inequality holds is called the \emph{local Lojasiewicz exponent} for $p$. There is quite a number of publications devoted to evaluation of the Lojasiewicz exponent, see, e.g., \cite{abderrahmane:2005,Chadzynski:1998,Kurdyka:2005,Barroso:2003,Barroso:2005, Gwo:1999,Kollar:1999,Lenarcik:1997}. \\
Given a map $f:\R^n \to \R^m$ with $f(0)=0$ and an integer $p\geq 1$ define the following two functions in the variables $x \in \R^n$ and $y \in \R^m$ :
\[
\mathcal{R}_p(f;x,y)=|f(x)|^p|y|^p + |(df)^*(x) y|^p|x|^p
\]
and
\[
\mathcal{T}_p(f;x,y)=|f(x)|^p|y|^p + |(df)^*(x)y|^p|x|^p - |(df)^*(x)y \cdot x|^p.
\]

(Where we denote with $(df)^*(x)$ the conjugate of $df(x)$).  \\
Recall that an $r$-jet $z \in J^r(n,m)$ can be represented as a polynomial mapping in the variables $x=(x_1,\ldots,x_n)$ of degree less or equal to $r$. Then it holds the following theorem
\begin{theorem}[Kozyakin \cite{Koz:2010prk}]
Let $f \in C^r(\R^n,\R^m)$ with $n \geq m$. The jet $j^rf(0)=z$ is v-sufficient if and only if for evey $p \in \N$ there exist $q>0$, $\epsilon>0$ s.t.
\begin{equation}
\label{eq:koz}
\mathcal{X}(z;x,y) \geq q |x|^{pr}|y|^p \quad |x| < \epsilon, \forall y \in \R^m   
\end{equation}
Where $\mathcal{X}$ represents any one of the two polynomial functions $\mathcal{R}_p$ or $\mathcal{T}_p$.
\end{theorem}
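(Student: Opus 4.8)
The plan is to derive this equivalence from Kuo's characterisation of v-sufficiency (Theorem \ref{th:kuo}); the real content is a translation of the geometric Kuo distance $d(\nabla z_1,\dots,\nabla z_m)$ together with the ``horn neighbourhood'' restriction into the single global inequality \eqref{eq:koz}. Since $n\geq m$, Kuo's condition reads: there is $\epsilon>0$ with $d(\nabla z_1(x),\dots,\nabla z_m(x))\geq |x|^{r-1}$ whenever $|z(x)|\leq\epsilon|x|^r$ and $|x|<\epsilon$. I would prove the two implications separately, settling $\mathcal{R}_p$ first and deducing the $\mathcal{T}_p$ statement afterwards.

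First I would isolate a purely linear-algebraic lemma. Writing $A(x)$ for the $n\times m$ matrix with columns $\nabla z_1(x),\dots,\nabla z_m(x)$, one has $(dz)^*(x)y=A(x)y=\sum_i y_i\nabla z_i(x)$, so $\inf_{|y|=1}|(dz)^*(x)y|=\sigma_{\min}(A(x))$. I claim $d(\nabla z_1,\dots,\nabla z_m)\asymp\sigma_{\min}(A)$ with constants depending only on $m$: using the pseudoinverse $A^+$ one checks that the distance $h_i$ from $\nabla z_i$ to the span of the remaining gradients equals $1/\|(A^+)_i\|$ (with $(A^+)_i$ the $i$-th row of $A^+$), while $\sigma_{\min}(A)=1/\|A^+\|_{\mathrm{op}}$; the elementary bounds $\max_i\|(A^+)_i\|\leq\|A^+\|_{\mathrm{op}}\leq\sqrt{m}\,\max_i\|(A^+)_i\|$ then give $d/\sqrt{m}\leq\sigma_{\min}(A)\leq d$, \emph{irrespective of the magnitudes of the gradients}. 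Hence Kuo's bound $d\geq|x|^{r-1}$ is equivalent, up to the factor $\sqrt{m}$, to $|(dz)^*(x)y|\geq c\,|x|^{r-1}|y|$ for all $y$.

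Next I would establish the $\mathcal{R}_p$ equivalence by a horn decomposition. For ``v-sufficient $\Rightarrow$ inequality'', fix $p$ and split $\set{|x|<\epsilon}$ into the horn $\set{|z(x)|\leq\delta|x|^r}$ and its complement. Inside the horn the lemma gives $|(dz)^*(x)y|^p|x|^p\geq c^p|x|^{(r-1)p}|x|^p|y|^p=c^p|x|^{pr}|y|^p$, so $\mathcal{R}_p\geq c^p|x|^{pr}|y|^p$; outside the horn $|z(x)|^p|y|^p>\delta^p|x|^{pr}|y|^p$, so again $\mathcal{R}_p\geq\delta^p|x|^{pr}|y|^p$, and $q=\min(c^p,\delta^p)$ yields \eqref{eq:koz}. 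Conversely, if the $\mathcal{R}_p$-inequality holds, restricting to $x$ in the horn $\set{|z(x)|\leq\epsilon|x|^r}$ gives $|(dz)^*(x)y|^p|x|^p\geq(q-\epsilon^p)|x|^{pr}|y|^p$, i.e. $|(dz)^*(x)y|\geq c'|x|^{r-1}|y|$, and the lemma returns Kuo's condition (the normalising constant being irrelevant). A single exponent $p$ suffices here, whereas the forward direction is carried out for each $p$, which explains the quantifier ``for every $p$''.

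The $\mathcal{T}_p$ version is where I expect the main obstacle. Since $\mathcal{T}_p=\mathcal{R}_p-|(dz)^*(x)y\cdot x|^p\leq\mathcal{R}_p$, the bound for $\mathcal{T}_p$ trivially forces the one for $\mathcal{R}_p$, hence v-sufficiency. The delicate direction is v-sufficiency $\Rightarrow$ $\mathcal{T}_p$-bound, where the subtracted radial term must be shown harmless. Outside the horn this is immediate: by Cauchy--Schwarz $|(dz)^*(x)y\cdot x|\leq|(dz)^*(x)y|\,|x|$, so the bracket $|(dz)^*(x)y|^p|x|^p-|(dz)^*(x)y\cdot x|^p$ is nonnegative and $\mathcal{T}_p\geq|z(x)|^p|y|^p>\delta^p|x|^{pr}|y|^p$. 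Inside the horn one must bound the radial defect, and here the Euler relation for the homogeneous pieces of $z$ gives $\nabla z_i(x)\cdot x=\sum_{d}d\,z_i^{(d)}(x)$, so that $|(dz)^*(x)y\cdot x|\lesssim|z(x)|\,|y|$ up to lower-degree contributions; this makes the subtracted term $\lesssim\epsilon^p|x|^{pr}|y|^p$ on the horn, whence $\mathcal{T}_p\geq(c^p-C\epsilon^p)|x|^{pr}|y|^p$ remains positive for $\epsilon$ small. Making the control of the intermediate-degree parts of $z$ uniform, so that this near-homogeneity estimate survives the presence of all degrees in the jet, together with the precise interplay of $p$, $r$ and the horn width $\delta$, is the technical heart of the argument; the remaining estimates are routine.
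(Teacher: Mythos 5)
First, a point of reference: the paper does not prove this theorem at all; it is quoted from Kozyakin with a pointer to \cite{Koz:2010prk}, so your attempt must be judged on its own merits. Your $\mathcal{R}_p$ half is sound, and it is the natural (indeed Kozyakin-style) route: the pseudoinverse lemma identifying Kuo's distance $d(\nabla z_1,\dots,\nabla z_m)$ with $\min_{|y|=1}|(dz)^*(x)y|$ up to a factor $\sqrt{m}$ is correct, the absorption of the horn condition into the unconditional inequality by adding $|z(x)|^p|y|^p$ works in both directions, and your observation that the converse needs only a single exponent $p$ while the forward direction holds for every $p$ is exactly right (modulo the routine fact that Kuo's condition is insensitive to the multiplicative constant and to shrinking the horn width).

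The genuine gap is in the direction ``v-sufficiency $\Rightarrow$ $\mathcal{T}_p$-bound'', and it is more than the technicality you flag. Your key estimate --- that inside the horn the Euler relation gives $|(dz)^*(x)y\cdot x|\leq C|z(x)|\,|y|$ up to lower-degree terms, hence a subtracted term of size at most $C\epsilon^p|x|^{pr}|y|^p$ --- is false. Euler's identity holds for each homogeneous piece separately, $\nabla z_i(x)\cdot x=\sum_d d\,z_i^{(d)}(x)$, and the pieces can cancel in $z_i(x)=\sum_d z_i^{(d)}(x)$ while their weighted sum stays large. Concretely, take $n=2$, $m=1$, $r=2$, $z=x_1-x_1^2-x_2^2$, a v-sufficient jet (its gradient at $0$ is nonzero). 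On its zero set, which lies inside every horn, $\nabla z(x)\cdot x=z(x)-|x|^2=-|x|^2$, so the subtracted term equals $|x|^{pr}|y|^p$ exactly, with constant $1$, however small you take the horn width or $\epsilon$. The $\mathcal{T}_p$ inequality still holds there, but only because the main term $|(dz)^*(x)y|^p|x|^p\approx|x|^p|y|^p$ dwarfs $|x|^{2p}|y|^p$: what must actually be proved is that the radial defect is small \emph{relative to} $|(dz)^*(x)y|^p|x|^p$, not relative to $|x|^{pr}|y|^p$. Equivalently, one needs that in a (narrower) horn the gradients projected onto $x^{\perp}$ still satisfy a Kuo-type bound $\geq c|x|^{r-1}|y|$. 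This is a true statement about polynomial jets, but no pointwise manipulation can yield it --- it fails for general smooth maps (whose zero sets may oscillate tangentially to small spheres while Kuo's condition holds), so any proof must exploit semialgebraicity. The standard mechanism is the curve selection lemma applied to the failure set $\bigl\{(x,y)\;:\;|z(x)|\leq c|x|^r,\ |y|=1,\ |(dz)^*(x)y - (\,(dz)^*(x)y\cdot \tfrac{x}{|x|}\,)\tfrac{x}{|x|}|\leq c|x|^{r-1}\bigr\}$: along an analytic curve $(\gamma(s),y(s))$ in this set, $\langle y(s),z(\gamma(s))\rangle$ is analytic and $O(s^{kr})$, hence has derivative $O(s^{kr-1})$, and analytic curves approach the origin asymptotically radially; this forces the radial component of $(dz)^*(x)y$ to be $O(c|x|^{r-1})$ along the curve, contradicting Kuo's lower bound once $c$ is small. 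Without this argument (or an explicit citation of the corresponding projected-gradient lemma in Kuo's or Kozyakin's papers), the $\mathcal{T}_p$ equivalence remains unproven; the rest of your proposal stands.
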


\bibliographystyle{spmpsci}      
\bibliography{synthesis}   

\end{document}